\numberwithin{equation}{section}
\numberwithin{figure}{section}
\theoremstyle{plain}
\newtheorem{thm}{\protect\theoremname}[section]
  \theoremstyle{plain}
  \newtheorem{fact}[thm]{\protect\factname}
  \theoremstyle{remark}
  \newtheorem{claim}[thm]{\protect\claimname}
  \theoremstyle{definition}
  \newtheorem{defn}[thm]{\protect\definitionname}
  \theoremstyle{plain}
  \newtheorem{cor}[thm]{\protect\corollaryname}
  \theoremstyle{remark}
  \newtheorem{rem}[thm]{\protect\remarkname}
  \theoremstyle{plain}
  \newtheorem{lem}[thm]{\protect\lemmaname}
  \theoremstyle{definition}
  \newtheorem{example}[thm]{\protect\examplename}
 \theoremstyle{definition}
 \newtheorem*{defn*}{\protect\definitionname}
  \theoremstyle{remark}
  \newtheorem*{claim*}{\protect\claimname}
\theoremstyle{plain}
\newtheorem{obs}[thm]{Observation}
  \theoremstyle{plain}
  \newtheorem{prop}[thm]{\protect\propositionname}
  \theoremstyle{definition}
  \newtheorem{problem}[thm]{\protect\problemname}
\newcommand{\NTP}{NTP$_{2}$ }
  \providecommand{\claimname}{Claim}
  \providecommand{\corollaryname}{Corollary}
  \providecommand{\definitionname}{Definition}
  \providecommand{\examplename}{Example}
  \providecommand{\factname}{Fact}
  \providecommand{\lemmaname}{Lemma}
  \providecommand{\problemname}{Problem}
  \providecommand{\propositionname}{Proposition}
  \providecommand{\remarkname}{Remark}
\providecommand{\theoremname}{Theorem}
\begin{document}
\begin{flushleft}
\global\long\def\NTPT{\operatorname{NTP}_{\operatorname{2}}}
\global\long\def\C{\mathfrak{C}}
\global\long\def\Aut{\operatorname{Aut}}
\global\long\def\tp{\operatorname{tp}}
\global\long\def\id{\operatorname{id}}
\global\long\def\ist{\operatorname{ist}}
\global\long\def\C{\mathfrak{C}}
\global\long\def\alt{\operatorname{alt}}
\global\long\def\st{\operatorname{st}}
\global\long\def\dom{\operatorname{dom}}
\global\long\def\acl{\operatorname{acl}}
\global\long\def\eq{\operatorname{eq}}

\par\end{flushleft}

 %% Independence 
\def\Ind#1#2{#1\setbox0=\hbox{$#1x$}\kern\wd0\hbox to 0pt{\hss$#1\mid$\hss} \lower.9\ht0\hbox to 0pt{\hss$#1\smile$\hss}\kern\wd0} 
\def\Notind#1#2{#1\setbox0=\hbox{$#1x$}\kern\wd0\hbox to 0pt{\mathchardef \nn="3236\hss$#1\nn$\kern1.4\wd0\hss}\hbox to 0pt{\hss$#1\mid$\hss}\lower.9\ht0 \hbox to 0pt{\hss$#1\smile$\hss}\kern\wd0} 

  \theoremstyle{definition}
  \newtheorem{defClaim}[thm]{Definition/Claim}

\begin{flushleft}
\global\long\def\ind{\mathop{\mathpalette\Ind{}}}
 \global\long\def\nind{\mathop{\mathpalette\Notind{}}}
\global\long\def\leftexp#1#2{{\vphantom{#2}}^{#1}{#2}}
\global\long\def\mplus{\left\lfloor \left(m+1\right)/2\right\rfloor }

\par\end{flushleft}

\global\long\def\Aut{\operatorname{Aut}}
\global\long\def\calB{\mathcal{B}}
\global\long\def\calI{\mathcal{I}}
\global\long\def\ball#1#2{\calB(#1, #2)}
\global\long\def\calU{\mathcal{U}}
\global\long\def\calV{\mathcal{V}}
\global\long\def\Cantorspace{\functions{\omega}{2}}
\global\long\def\closure#1{\overline{#1}}
\global\long\def\concatenation{\mathrel{\smallfrown}}
\global\long\def\constantsequence#1#2{\left(#1\right){}^{#2}}
\global\long\def\diameter#1{\mathrm{diam}\left(#1\right)}
\global\long\def\Ezero{\mathbb{E}_{0}}
\global\long\def\from{\colon}
\global\long\def\Fsigma{F_{\sigma}}
\global\long\def\functions#1#2{#2^{#1}}
\global\long\def\heightcorrection#1{\raisebox{0pt}[0pt][0pt]{#1}}
\global\long\def\ideal#1{\calI_{#1}}
\global\long\def\image#1#2{#1\left[#2\right]}
\global\long\def\inverse#1{#1^{-1}}
\global\long\def\mathand{\text{ and }}
\global\long\def\N{\mathbb{N}}
\global\long\def\orbitequivalencerelation#1#2{E_{#1}^{#2}}
\global\long\def\pair#1#2{\left(#1,#2\right)}
\global\long\def\relationpower#1#2{#1^{\left(#2\right)}}
\global\long\def\restriction#1#2{#1 \upharpoonright#2}
\global\long\def\setcomplement#1{\twiddle#1}
\global\long\def\sets#1#2{\left[#2\right]{}^{#1}}
\global\long\def\suchthat{\mid}
\global\long\def\twiddle{\raisebox{1pt}{\scalebox{.75}{$\mathord{\sim}$}}}
\global\long\def\C{\mathfrak{C}}
\global\long\def\acl{\operatorname{acl}}
\global\long\def\tp{\operatorname{tp}}
\global\long\def\qf{\operatorname{qf}}
\global\long\def\Nn{\mathbb{N}}
\global\long\def\id{\operatorname{id}}
\global\long\def\SS{\mathcal{P}}
\global\long\def\EM{\operatorname{EM}}
\global\long\def\dcl{\operatorname{dcl}}
\global\long\def\Autf{\operatorname{Aut}f_{L}}
\global\long\def\eq{\operatorname{eq}}
\global\long\def\image{\mbox{image}}

\global\long\def\pamod#1{\pmod#1}

\global\long\def\nf{\mbox{nf}}
\global\long\def\Uu{\mathcal{U}}
\global\long\def\dom{\operatorname{dom}}
\global\long\def\concat{\smallfrown}
\global\long\def\Nn{\mathbb{N}}
\global\long\def\mathrela#1{\mathrel{#1}}
\global\long\def\twiddle{\mathord{\sim}}
\global\long\def\stab{\operatorname{stab}}
 \global\long\def\x{\times}
\global\long\def\diam{\operatorname{diam}}
\global\long\def\EZero{\mathbb{E}_{0}}
\global\long\def\sequence#1#2{\left\langle #1\left|\,#2\right.\right\rangle }
\global\long\def\set#1#2{\left\{  #1\left|\,#2\right.\right\}  }
\global\long\def\cardinal#1{\left|#1\right|}
\global\long\def\calO{\mathcal{O}}
\global\long\def\mathordi#1{\mathord{#1}}
\global\long\def\Ezero{\EZero}
\global\long\def\xx{\mathbf{x}}

\global\long\def\NTPT{\operatorname{NTP}_{\operatorname{2}}}
\global\long\def\ist{\operatorname{ist}}
\global\long\def\C{\mathfrak{C}}
\global\long\def\alt{\operatorname{alt}}
\global\long\def\Ff{\mathbb{F}}
\global\long\def\Ll{\mathfrak{L}}
\global\long\def\calU{\mathcal{U}}
\global\long\def\Qq{\mathbb{Q}}
\global\long\def\AGLn{AGL_{n}\left(\Qq\right)}
\global\long\def\AGLt{AGL_{2}\left(\Qq\right)}
\global\long\def\AGL#1{AGL_{#1}\left(\Qq\right)}
\global\long\def\Zz{\mathbb{Z}}
\global\long\def\GL#1{GL_{#1}\left(\Qq\right)}
\global\long\def\bb{\mathfrak{b}}
\global\long\def\QqOmega{\Qq^{<\omega}}
\global\long\def\PGL#1{PGL_{#1}\left(\Qq\right)}
\global\long\def\PGaL#1#2{P\Gamma L_{#1}\left(#2\right)}
\global\long\def\AGalL#1#2{A\Gamma L_{#1}\left(#2\right)}

\title{Strict independence }

\author{Itay Kaplan and Alexander Usvyatsov }
\begin{abstract}
We investigate the notions of strict independence and strict non-forking,
and establish basic properties and connections between the two. In
particular it follows from our investigation that in resilient theories
strict non-forking is symmetric. Based on this study, we develop notions
of weight which characterize \NTP{}, dependence and strong dependence.
Many of our proofs rely on careful analysis of sequences that witness
dividing. We prove simple characterizations of such sequences in resilient
theories, as well as of Morley sequences which are witnesses. As a
by-product we obtain information on types co-dominated by generically
stable types in dependent theories. For example, we prove that every
Morley sequence in such a type is a witness.
\end{abstract}
\maketitle

\section{Introduction and preliminaries}

\subsection{Background}

This article is devoted to the study of two related notions: strict
independence and strict non-forking. We explore basic properties of
these concepts and establish connections between the two. 

Initially this work was carried out in the context of dependent theories.
Later we realized that many of the results carry over to a much more
general class of resilient theories \cite{cheBenya}. This is the
main setting of this paper. We will not, however, make any global
assumptions. Some of the results are relevant in a larger framework
of \NTP{} theories, and some are true for an arbitrary theory. We
state all the relevant assumptions locally. 

A central concept in our investigation, and a major technical tool,
is that of a \emph{witness}: an infinite sequence that always witnesses
dividing. We characterize witnesses in resilient theories in terms
of forking, and investigate their properties. 

\medskip{}

Strict independence is a new concept. This work is the first paper
where it is explicitly defined and systematically studied. However,
it has been used implicitly in several articles dealing to some extent
with different candidates for notions of weight in dependent theories,
such as \cite{Sh783,AlfAlex-dp,Us}. It was then isolated by the second
author in \cite{Us1} as a possible notion giving rise to a good concept
of orthogonality in dependent theories. 

Strict non-forking was defined by Shelah in \cite{Sh783}. The main
result that Shelah proves in regard to this notion is essentially
that in dependent theories strict non-forking implies strict independence
(in this paper, we refer to this result as ``Shelah's Lemma'').
Implicit in Shelah's paper is the following important fact: a strictly
non-forking sequence in a dependent theory is a witness (actually,
assuming \NTP{} and strict independence is enough). This fact was
later isolated and explicitly stated in \cite{kachernikov} and \textbf{\cite{Us1}}.\textcolor{magenta}{{}
}We refer to this fact here as ``Kim's Lemma for \NTP{} theories''.
The existence of such sequences was established in \cite{kachernikov}.

Existence of manageable witnesses is an important property of a theory.
For example, Kim showed that in a simple theory, all Morley sequences
are witnesses. This was a key technical result that allowed the development
of simple theories: e.g., it lies at heart of Kim's proof of symmetry
of non-forking. In \cite{kachernikov}, the existence of witnesses
was the main step in the proof of forking = dividing. So strict non-forking
and witnesses have already proved very useful in the study of \NTP{}
theories.

Nevertheless, very little was known in general about these concepts.
For example, it was not known whether or not strict non-forking is
symmetric. No natural characterization of the class of witnesses was
known either. The situation is remedied to a large extent by the current
investigation. 

If the theory $T$ is simple, then strict independence and strict
non-forking coincide, and both are equivalent to non-forking. It would
be very interesting to establish a strong connection between either
one of these concepts and non-forking in an arbitrary resilient or
at least dependent theory. We have attempted to address this question,
but have not yet succeeded to find a satisfactory answer.

\subsection{Overview}

Informally, we call a set of elements (tuples) \emph{strictly independent
}if every collection of indiscernible sequences starting with these
elements may be assumed mutually indiscernible. More precisely, a
set $\left\{ a_{i}\right\} $ is called strictly independent if for
any collection of indiscernible sequences $I_{i}$ where $I_{i}$
starts with $a_{i}$, there exist $I'_{i}$ of the same type as $I_{i}$
starting with $a_{i}$ such that $I'_{i}$ is indiscernible over $I'_{\neq i}$.
See Definition \ref{def:strictind}.

Strict non-forking is the forking notion (that is, the extensible
notion) that corresponds to the symmetrization of usual forking. That
is, $\tp\left(a/Ab\right)$ is a strictly non-forking extension of
$\tp\left(a/A\right)$ if for every $B$ containing $Ab$ there is
$a'\equiv_{Ab}a$ such that $a'\ind_{A}B$ and $B\ind_{A}^{d}a'$.
See Definition \ref{def:strictnonfork}. One can easily define strict
non-forking and strict Morley sequences as usual based on this notion,
Definition \ref{def:strictMorley}. 

As mentioned above, one central notion in our paper is that of a witness.
An indiscernible (or just 1-indiscernible)\textcolor{green}{{} }sequence
$I$ is called a witness for $a\in I$ if for any formula $\varphi(x,a)$,
whenever $\varphi(x,a)$ divides, $I$ witnesses it (see Definition
\ref{def:witness}). ``Kim's Lemma'' states that in a dependent
theory a strict Morley sequence is a witness. That is, one can think
of strict non-forking as a technical tool that gives rise to witnesses.
The first result in this paper is Lemma \ref{lem:main lemma}, a simple,
but an incredibly useful observation that in a sense ``reverses the
roles'' and allows to use witnesses in order to ``test'' for strict
non-forking. This result confirms the tight connection between the
concepts of strict non-forking and witnesses: one can use either one
in order to draw conclusions on the other.

``Shelah's Lemma'' says that in a dependent theory a strictly non-forking
sequence is also strictly independent. Since one of these notions
depends on the order, and the other does not, we can not expect the
converse to hold. Nevertheless, we prove (Theorem \ref{thm:Main})
that in a resilient theory tuples $a,b$ are strictly independent\textcolor{black}{{}
(a symmetric notion)} over an extension base $A$ if and only if $\tp\left(a/Ab\right)$
is a strictly non-forking extension of $\tp\left(a/A\right)$ if and
only if $\tp\left(b/Aa\right)$ is a strictly non-forking extension
of $\tp\left(b/A\right)$. In particular, we conclude that in resilient
theories strict non-forking is symmetric over extension bases. We
also strengthen ``Kim's Lemma'' by generalizing it to resilient
theories (Corollary \ref{cor:strict morley are witnesses in resilient}).

``Kim's Lemma'' suggests the following natural questions: Are there
convenient characterizations of sequences that are witnesses? Which
Morley sequences are witnesses? We answer these questions completely
under the assumption of resilience. For example, we show that \textcolor{black}{(over
an extension base)} a sequence $I=\left\langle a_{i}\right\rangle $
is a witness if and only if $\tp\left(a_{\neq i}/a_{i}\right)$ does
not fork; that is, $a_{\neq i}\ind a_{i}$, Theorem \ref{thm:characterize_witness}.
In addition, we show that a Morley sequence is strict Morley if and
only if it is a witness, Corollary \ref{cor:strictmorley_char}. This
confirms that ``Kim's Lemma'' is in a sense ``as good as it gets''. 

Before this work, it was not clear whether a strict Morley sequence
is necessarily ``totally strict'', that is, the sequences of pairs,
triplets, etc, are strict Morley sequences as well. Here we prove
a surprisingly simple and straightforward characterization of totally
strict Morley sequences as two-way Morley sequences (that is, the
sequence remains Morley when one reverses the order), Corollary \ref{cor:2way}.
We also present an example (due to Pierre Simon) of a strict Morley
sequence which is not totally strict (witnessed by the sequence of
pairs), Example \ref{exa:pierre}. This shows in particular that strict
non-forking can be sensitive to the ordering on the set: in Simon's
example one obtains a strict Morley sequence, which is not even a
Morley sequence once the order is reversed.

Under the further assumption that the theory is of bounded alternation
rank (e.g., dp-minimal), we present ``quantitative'' versions of
the results described above . For example, we calculate an explicit
bound on the length of a finite witness which is necessary to test
strict non-forking, Proposition \ref{prop:bounded version of main lemma}
(a quantitative version of Lemma \ref{lem:main lemma}). Similarly,
Corollary \ref{cor:dp-minimal witness} provides a bound on ``how
much non-forking'' one needs to produce a witness (a quantitative
version of Theorem \ref{thm:characterize_witness}). 

We proceed to drawing conclusions on types co-dominated by generically
stable types in dependent theories. In particular, we prove that any
Morley sequence in such a type is strict (hence is a witness), Theorem
\ref{thm:dombystable-strict}. As a corollary, one sees that non-forking
is symmetric on realizations of such types, Corollary \ref{cor:dombystable-symm}.

Section \ref{sec:My-weight-problem} is devoted to notions of weight
arising from the concepts of non-forking and independence studied
here. In particular, we characterize strong dependence (resp., dependence)
in terms of the appropriate weight being almost finite (resp., bounded). 

Finally, section \ref{sec:Problems} lists a few problems. 

Let us remark that even though simple theories are resilient, this
work gives little new information in this context. As shown in Theorem
\ref{thm:simple}, in simple theories forking and strict forking agree
(and as a matter of fact, this characterizes simplicity). Therefore
assuming simplicity, our results essentially amount to the following
(quite easy) observations: an indiscernible sequence is a witness
if and only if it is a Morley sequence, and a set is independent if
and only its elements start mutually indiscernible Morley sequences
if and only if it is strictly independent, Theorem \ref{thm:simple-allequivalent}.

We would like to thank Pierre Simon and Artem Chernikov for several
comments and suggestions. We would also like to thank the anonymous
referee for his useful remarks. In particular, we are thankful to
the referee for suggesting switching the emphasis from dependent theories
to a more general context. This suggestion led to a thorough revision
of the paper, during which we realized that more results can be generalized
to resilient theories than what we had originally thought.

\section{Preliminaries}

\subsection{Notations}

Our notations are standard: we are working with a complete first order
theory $T$, and $\C$ is its monster model.

By $a\ind_{A}^{f}B$ we mean that $\tp\left(a/BA\right)$ does not
fork over $A$, $a\ind_{A}^{d}B$ means that $\tp\left(a/BA\right)$
does not divide over $A$, $a\ind_{A}^{s}b$ means that $\tp\left(a/bA\right)$
does not strongly split over $A$ (see after Definition \ref{def:NTP})
and $a\ind_{A}^{i}b$ means that $\tp\left(a/bA\right)$ has a global
Lascar invariant extension over $A$. With the exception of Section
\ref{sec:My-weight-problem}, we shall omit the $f$ from $\ind^{f}$,
so $\ind$ will just mean non-forking.

If $I$ is a sequence (usually indiscernible), then $\varphi\left(x,I\right)$
denotes the set $\set{\varphi\left(x,a\right)}{a\in I}$.

\subsection{Preliminaries}

\subsubsection{Forking and dividing}
\begin{fact}
\label{fac:dividing} \cite[1.4]{SheSimple} The following are equivalent
for any theory $T$:
\begin{enumerate}
\item $a\ind_{A}^{d}b$.
\item For every indiscernible sequence $I$ over $A$ such that $b\in I$,
there is an indiscernible sequence $I'$ such that $I'\equiv_{Ab}I$
and $I'$ is indiscernible over $Aa$.
\item For every indiscernible sequence $I$ over $A$ such that $b\in I$,
there is $a'$ such that $a'\equiv_{Ab}a$ and $I$ is indiscernible
over $Aa'$.
\end{enumerate}
\end{fact}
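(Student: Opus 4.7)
The plan is to establish the cycle $(2) \Rightarrow (3) \Rightarrow (1) \Rightarrow (2)$. The first two arrows are essentially formal. For $(2) \Rightarrow (3)$, given an $I'$ as in $(2)$, pick $\sigma \in \Aut(\C/Ab)$ with $\sigma(I') = I$ and set $a' := \sigma(a)$: then $a' \equiv_{Ab} a$, and $I = \sigma(I')$ is indiscernible over $\sigma(Aa) = Aa'$. For $(3) \Rightarrow (1)$ I argue by contrapositive: if $\varphi(x,b) \in \tp(a/Ab)$ divides over $A$ via an $A$-indiscernible $I = \langle b_i \rangle_{i<\omega}$ with $b_0 = b$ and $\{\varphi(x,b_i)\}_{i<\omega}$ inconsistent, then no $a' \equiv_{Ab} a$ can make $I$ indiscernible over $Aa'$, since such $a'$ would satisfy $\varphi(a',b)$ and hence $\varphi(a',b_i)$ for every $i$ by indiscernibility, contradicting inconsistency.

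The remaining implication $(1) \Rightarrow (2)$ is the heart of the argument. Fix $I = \langle b_i \rangle_{i<\omega}$ $A$-indiscernible with $b \in I$; by a shift we may assume $b_0 = b$. First extend $I$ to a very long $A$-indiscernible $J = \langle b_i \rangle_{i<\kappa}$ for $\kappa$ sufficiently large (say $\kappa \geq \beth_{(2^{|T|+|A|})^+}$). Non-dividing of $\tp(a/Ab)$, combined with $A$-indiscernibility of $J$, yields by compactness that $\bigcup_{i<\kappa} \sigma_i(\tp(a/Ab))$ is consistent, where $\sigma_i$ is the shift $b \mapsto b_i$ supplied by indiscernibility; let $a^*$ realize it, so $a^* b_i \equiv_A a b$ for every $i<\kappa$. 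Erd\H{o}s--Rado now produces an $Aa^*$-indiscernible $J' = \langle b'_i \rangle_{i<\omega}$ whose finite subtuples are $Aa^*$-conjugate to finite subtuples of $J$; in particular $J' \equiv_A J$ and $a^* b'_0 \equiv_A a^* b_j \equiv_A a b$ for some $j$. Choose $\tau \in \Aut(\C/A)$ with $\tau(a^*, b'_0) = (a, b)$ and set $I' := \tau(J')$: this sequence is indiscernible over $A\tau(a^*) = Aa$, starts with $b$, and has the same type over $A$ — hence, as an indiscernible sequence starting with $b$, the same type over $Ab$ — as $I$.

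The main obstacle is the coordination in this last step: one must arrange that the $Aa^*$-indiscernible sequence produced by Erd\H{o}s--Rado maps under $\tau$ to a sequence which is simultaneously $Aa$-indiscernible and starts exactly at $b$. This is handled automatically by the Ramsey extraction, because the first element $b'_0$ of $J'$ inherits a type over $Aa^*$ already realized inside $J$, and every element of $J$ satisfies $a^* b_j \equiv_A a b$ by the choice of $a^*$; thus a single automorphism $\tau$ over $A$ sends $(a^*, b'_0)$ to $(a,b)$, which is precisely what lets the $Aa^*$-indiscernibility of $J'$ translate into $Aa$-indiscernibility of $I'$.
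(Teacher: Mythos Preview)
The paper does not prove this fact; it is simply cited from \cite[1.4]{SheSimple} and used as a black box. Your argument is correct and is the standard one: the implications $(2)\Rightarrow(3)\Rightarrow(1)$ are formal, and for $(1)\Rightarrow(2)$ the compactness step realizing $\bigcup_i \tp(a/Ab)^{b\mapsto b_i}$ along a long $A$-indiscernible copy of $I$, followed by Erd\H{o}s--Rado extraction over $Aa^*$ and an automorphism sending $(a^*,b'_0)$ to $(a,b)$, is exactly the classical route.

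One minor remark: your reduction ``by a shift we may assume $b_0=b$'' and the implicit restriction to order type $\omega$ are harmless but deserve a word. For arbitrary order type and arbitrary position of $b$, the same construction gives an $Aa$-indiscernible $\omega$-sequence with first element $b$ and the same EM-type over $A$ as $I$; compactness then stretches this to the required order type with $b$ in the correct position, and $I'\equiv_{Ab}I$ follows since two $A$-indiscernible sequences with the same EM-type over $A$ and with $b$ in the same position automatically have the same type over $Ab$.
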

\begin{claim}
\label{cla:preservationdividing} Assume that $a\ind_{B}b$ and $\varphi\left(x,b\right)$
divides / forks over $B$, then $\varphi\left(x,b\right)$ divides
/ forks over $Ba$ as well.\end{claim}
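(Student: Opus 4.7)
The plan is to prove the dividing assertion first and then bootstrap it to the forking assertion.

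For the \emph{dividing} case: since $a \ind_{B} b$ implies $a \ind_{B}^{d} b$, Fact \ref{fac:dividing}(3) applies directly. I would fix a $B$-indiscernible sequence $I = \langle b_j : j < \omega \rangle$ with $b_0 = b$ witnessing that $\varphi(x, b)$ divides over $B$. The fact produces $a' \equiv_{Bb} a$ such that $I$ is indiscernible over $Ba'$. Choosing $\sigma \in \Aut(\C / Bb)$ with $\sigma(a') = a$, the image $\sigma(I)$ is a $Ba$-indiscernible sequence starting with $b$, and $\{ \varphi(x, \sigma(b_j)) : j < \omega \}$ is still $k$-inconsistent (an automorphism preserves inconsistency), so $\varphi(x, b)$ divides over $Ba$.

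For the \emph{forking} case: write $\varphi(x, b) \vdash \bigvee_{i \le n} \psi_i(x, c_i)$ with each $\psi_i(x, c_i)$ dividing over $B$. I want to apply the dividing case to every $\psi_i$ at once, so I need a single $a' \equiv_{Bb} a$ that is non-forking-independent over $B$ from each $c_i$ simultaneously. The key input is the extension property of non-forking, which I would verify by compactness: the partial type $\tp(a / Bb) \cup \{\, \neg \chi(x, d) : \chi(x, d)$ a formula over $Bbc_1 \ldots c_n$ dividing over $B \,\}$ is consistent, for otherwise a finite disjunction of $B$-dividing formulas would be implied by $\tp(a / Bb)$, contradicting $a \ind_{B} b$. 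A realization $a'$ of the partial type satisfies $a' \equiv_{Bb} a$ and $a' \ind_{B} bc_1 \ldots c_n$, hence $a' \ind_{B} c_i$ for each $i$.

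With $a'$ in hand, the dividing case applied to each $\psi_i(x, c_i)$ shows that $\psi_i(x, c_i)$ divides over $Ba'$, so $\varphi(x, b) \vdash \bigvee_i \psi_i(x, c_i)$ witnesses that $\varphi(x, b)$ forks over $Ba'$. Transporting by $\sigma \in \Aut(\C / Bb)$ with $\sigma(a') = a$ yields that $\varphi(x, b)$ forks over $Ba$. The main point that requires care is the extension step, as it is the only place where the forking hypothesis $a \ind_{B} b$ is used beyond the non-dividing consequence that drives the first case.
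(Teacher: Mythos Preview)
Your proof is correct and follows essentially the same route as the paper: the dividing case via Fact~\ref{fac:dividing}, and the forking case via the extension property of non-forking (which the paper compresses to ``we may assume $a\ind_{B} b\langle b_i\rangle$'') followed by the dividing case applied to each disjunct. One minor slip: the partial type you wrote excludes only \emph{dividing} formulas, so a realization $a'$ is only guaranteed to satisfy $a'\ind_{B}^{d} bc_1\ldots c_n$, not $a'\ind_{B} bc_1\ldots c_n$; either replace ``dividing'' by ``forking'' there (the consistency argument still goes through since forking formulas form an ideal), or simply note that $\ind^{d}$ suffices because your dividing case already works from the weaker hypothesis $a\ind_{B}^{d} b$.
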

\begin{proof}
For dividing this follows from Fact \ref{fac:dividing}. Assume $\varphi\left(x,b\right)$
forks over $B$, so there are $n<\omega$, $\varphi_{i}\left(x,y_{i}\right)$
and $b_{i}$ for $i<n$ such that $\varphi_{i}\left(x,b_{i}\right)$
divides over $B$ and $\varphi\left(x,b\right)\vdash\bigvee_{i<n}\varphi_{i}\left(x,b_{i}\right)$.
We may assume $a\ind_{A}b\left\langle b_{i}\left|\, i<n\right.\right\rangle $.
So $\varphi_{i}\left(x,b_{i}\right)$ divides over $Ba$ and hence
$\varphi\left(x,b\right)$ forks over $Ba$. 
\end{proof}
We shall also use the fact that forking independence satisfies transitivity
(on the left):
\begin{fact}
(see e.g., \cite{Ad3})  if $a\ind_{A}bc$ then $a\ind_{Ab}c$. If
$b\ind_{A}c$ and $a\ind_{Ab}c$ then $ab\ind_{A}c$. \end{fact}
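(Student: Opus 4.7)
The statement packages two structural properties of forking independence---base monotonicity and left transitivity---valid in any theory. My plan is to prove them separately, working directly from the definitions of dividing and forking.

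For the first clause, $a\ind_{A}bc\Rightarrow a\ind_{Ab}c$, I would argue by contraposition. Assume $\tp(a/Abc)$ forks over $Ab$, witnessed by $\varphi(x,e)\vdash\bigvee_{i<n}\psi_{i}(x,f_{i})$ with each $\psi_{i}(x,f_{i})$ dividing over $Ab$. The key move is to absorb $b$ into the parameters: rewrite $\psi_{i}(x,f_{i})$ as $\psi'_{i}(x,b,f_{i})$, treating $(b,f_{i})$ as the parameter tuple. If $\langle f_{i}^{k}\rangle_{k}$ is an $Ab$-indiscernible sequence witnessing dividing of $\psi_{i}(x,f_{i})$ over $Ab$, then the augmented sequence $\langle(b,f_{i}^{k})\rangle_{k}$ is $A$-indiscernible (constant in the first coordinate) and witnesses that $\psi'_{i}(x,b,f_{i})$ divides over $A$. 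Hence $\varphi(x,e)$ forks over $A$, contradicting $a\ind_{A}bc$.

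For the second clause, $b\ind_{A}c$ and $a\ind_{Ab}c\Rightarrow ab\ind_{A}c$, I would first prove the non-dividing analogue via two applications of Fact \ref{fac:dividing}(3). Given an $A$-indiscernible $\langle c_{k}\rangle_{k}$ with $c_{0}=c$, apply Fact \ref{fac:dividing}(3) to $b\ind_{A}^{d}c$ to obtain $b'\equiv_{Ac}b$ with $\langle c_{k}\rangle$ indiscernible over $Ab'$. Choose an automorphism of $\C$ fixing $Ac$ and sending $b$ to $b'$, and let $a'$ be the image of $a$, so that $(a',b')\equiv_{Ac}(a,b)$ and in particular $a'\ind_{Ab'}c$. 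A second application of Fact \ref{fac:dividing}(3), now over the base $Ab'$, yields $a''\equiv_{Ab'c}a'$ with $\langle c_{k}\rangle$ indiscernible over $Aa''b'$. Since $(a'',b')\equiv_{Ac}(a,b)$ and $\langle c_{k}\rangle$ is indiscernible over $Aa''b'$, the equivalence (1)$\Leftrightarrow$(3) in Fact \ref{fac:dividing} gives $ab\ind_{A}^{d}c$.

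The main obstacle I expect is upgrading this non-dividing conclusion to non-forking. My plan is formula-by-formula: if $\varphi(x,y,c)\in\tp(ab/Ac)$ satisfies $\varphi(x,y,c)\vdash\bigvee_{i<n}\theta_{i}(x,y,g_{i})$ with each $\theta_{i}$ dividing over $A$, I would use the extension property of non-forking to enlarge the hypotheses to $b^{*}\ind_{A}c\bar{g}$ and $a^{*}\ind_{Ab^{*}}c\bar{g}$ with $(a^{*},b^{*})\equiv_{Ac}(a,b)$, where $\bar{g}=(g_{0},\ldots,g_{n-1})$. The non-dividing transitivity already proved then yields $a^{*}b^{*}\ind_{A}^{d}c\bar{g}$, hence $a^{*}b^{*}\ind_{A}^{d}g_{i_{0}}$ for each $i_{0}$. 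Since $\varphi(a^{*},b^{*},c)$ holds, some $\theta_{i_{0}}(a^{*},b^{*},g_{i_{0}})$ must hold; applying Fact \ref{fac:dividing}(3) to the dividing sequence of $\theta_{i_{0}}$ over $A$ and to $a^{*}b^{*}\ind_{A}^{d}g_{i_{0}}$ produces a realization simultaneously satisfying all shifts of $\theta_{i_{0}}$, contradicting its $k$-inconsistency. Alternatively, the clean packaging in Adler \cite{Ad3} (the paper's cited source) handles dividing and forking uniformly in one proof.
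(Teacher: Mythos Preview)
The paper does not prove this Fact; it is stated with a citation to \cite{Ad3} and used as a black box throughout. Your proposal supplies a correct, standard proof.

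A couple of minor comments. In the forking upgrade for the second clause, your final step is more elaborate than necessary: once you have $(a^{*},b^{*})\equiv_{Ac}(a,b)$, $a^{*}b^{*}\ind_{A}^{d}c\bar g$, and $\varphi(a^{*},b^{*},c)$ holding, some $\theta_{i_{0}}(a^{*},b^{*},g_{i_{0}})$ holds; since $\theta_{i_{0}}(x,y,g_{i_{0}})$ divides over $A$ and lies in $\tp(a^{*}b^{*}/Ag_{i_{0}})$, this immediately contradicts $a^{*}b^{*}\ind_{A}^{d}g_{i_{0}}$. No further appeal to Fact~\ref{fac:dividing}(3) is needed. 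Also, when you invoke extension to obtain $b^{*}\ind_{A}c\bar g$ and then $a^{*}\ind_{Ab^{*}}c\bar g$, it is worth noting explicitly that right extension for non-forking holds in any theory (this is the defining feature of forking over dividing), so no hypothesis on $A$ is required.
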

\begin{defn}
$ $
\begin{enumerate}
\item A set $A$ is called an \emph{extension base }if no type over $A$
forks over $A$. Equivalently, every type over $A$ has a global extension
which does not fork over $A$.
\item A theory $T$ is called \emph{extensible} if every set is an extension
base. 
\end{enumerate}
\end{defn}

\subsubsection{Forking in dependent and \NTP{} theories. }

Recall that a formula $\varphi\left(x,y\right)$ has the \emph{independence
property} if there are $\sequence{a_{i},b_{s}}{i<\omega,s\subseteq\omega}$
in $\C$ such that $\varphi\left(a_{i},b_{s}\right)$ iff $i\in s$.
A theory $T$ is \emph{dependent (NIP)} if no formula has the independence
property. 
\begin{defn}
\label{def:NTP}A theory has the\emph{ tree property of the second
kind} if there is a formula $\varphi\left(x,y\right)$, $k<\omega$
and an array $\left\langle a_{i,j}\left|\, i,j<\omega\right.\right\rangle $
such that:
\begin{itemize}
\item All rows are $k$-inconsistent: for all $i$, $\left\{ \varphi\left(x,a_{i,j}\right)\left|\, j<\omega\right.\right\} $
is $k$-inconsistent (every $k$-subset is inconsistent).
\item All vertical paths are consistent: for all $\eta:\omega\to\omega$,
$\left\{ \varphi\left(x,a_{i,\eta\left(i\right)}\right)\left|\, i<\omega\right.\right\} $
is consistent.
\end{itemize}
A theory is\emph{ \NTP{}} if it does not have the tree property of
the second kind. 
\end{defn}
Both simple and dependent theories are \NTP{}. 

We remind the reader a few basic properties of non-forking in dependent
and \NTP{} theories. For the definitions and more, the reader is
referred to \cite{kachernikov} and also to \cite{Ad,Sh:c}. 

Recall that a type $\tp\left(a/B\right)$ \emph{splits strongly} over
a set $A\subseteq B$ if there are tuples $b_{1},b_{2}\in B$ such
that for some formula $\varphi\left(x,y\right)$ over $A$ we have
$\varphi\left(x,b_{1}\right)\land\neg\varphi\left(x,b_{2}\right)\in\tp\left(a/B\right)$
and there is an $A$-indiscernible sequence containing both $b_{1}$
and $b_{2}$. 
\begin{fact}
\label{fac:strongsplit}\cite{Sh783} ($T$ dependent) If $\tp(a/B)$
splits strongly over a set $A\subseteq B$, then $\tp(a/B)$ divides
over $A$. More generally, if $c,d\in B$ have the same Lascar strong
type over $A$, and $ac\not\equiv_{A}ad$ then $\tp\left(a/B\right)$
forks over $A$. In particular this is true when $A$ is a model and
$c\equiv_{A}d$. This means that if $p$ is a global type that does
not fork over a model $M$, then it is invariant over $M$. 
\end{fact}

\begin{fact}
(Preservation of indiscernibility)  Let $I$ be an indiscernible sequence
over a set $A$, and assume $b\ind_{A}^{i}I$. Then $I$ is indiscernible
over $Ab$. By Fact \ref{fac:strongsplit}, when $T$ is dependent,
$\ind=\ind^{i}$, hence the same holds with $\ind$.
\end{fact}
We recall some basic facts on invariant types:
\begin{defn}
Suppose $p$ is a global type which is invariant over a set $A$. 
\begin{enumerate}
\item We say that a sequence $\left\langle a_{i}\left|\, i<\alpha\right.\right\rangle $
is generated by $p$ over $B\supseteq A$ if $a_{0}\models p|_{B}$
and for all $i<\alpha$, $a_{i}\models p|_{Ba_{<i}}$. Note that this
sequence is indiscernible over $B$.
\item We let the type $p^{\left(\alpha\right)}$ be the union of $\tp\left(\left\langle a_{i}\left|\, i<\alpha\right.\right\rangle /B\right)$
running over all $B\supseteq A$, where $\left\langle a_{i}\left|\, i<\alpha\right.\right\rangle $
is some (any) sequence generated by $p$ over $B$. Note that this
type is well defined and invariant over $A$.
\end{enumerate}
\end{defn}
Note that models are always extension bases, but there are dependent
theories which are not extensible. Most of our results in this paper
assume that the sets of parameters one works over are extension bases.
The reason this assumption is helpful is the following result (due
to Chernikov and the first author) which will be explicitly and implicitly
used throughout the article:
\begin{fact}
\cite{kachernikov} ($T$ \NTP{}) Let $A$ be an extension base.
Then dividing over $A$ equals forking over $A$. More precisely,
if a formula $\varphi\left(x,a\right)$ forks over $A$, then it divides
over $A$. 
\end{fact}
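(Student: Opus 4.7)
The plan is to reduce forking to dividing over $A$ via Kim's Lemma for dependent theories, applied to a single simultaneously-chosen strict Morley sequence. The converse (dividing implies forking) is trivial, so only one direction needs work. Suppose $\varphi(x,a)$ forks over $A$, i.e.\ there exist formulas $\psi_{i}(x,y_{i})$ and parameters $b_{i}$ for $i<n$ with $\varphi(x,a)\vdash\bigvee_{i<n}\psi_{i}(x,b_{i})$ and each $\psi_{i}(x,b_{i})$ dividing over $A$. The strategy is to test $\varphi(x,a)$ against a single indiscernible sequence starting with $a$ that simultaneously witnesses the dividing of every $\psi_{i}$.

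Concretely, I would assemble the tuple $c=(a,b_{0},\ldots,b_{n-1})$ and, using that $A$ is an extension base, construct a strict Morley sequence $\langle c_{j}:j<\omega\rangle$ over $A$ with $c_{0}=c$. Writing $c_{j}=(a_{j},b_{0}^{j},\ldots,b_{n-1}^{j})$, Kim's Lemma for dependent theories yields that this sequence is a witness for $c$: for every formula $\chi(x,c)$ dividing over $A$, the set $\{\chi(x,c_{j}):j<\omega\}$ is inconsistent. Applying this to each $\psi_{i}(x,b_{i})$ (viewed as a formula with parameter the whole tuple $c$), compactness together with the indiscernibility of $\langle c_{j}\rangle$ yields some $k_{i}<\omega$ for which $\{\psi_{i}(x,b_{i}^{j}):j<\omega\}$ is $k_{i}$-inconsistent. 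Now choose $N>n\cdot\max_{i<n}k_{i}$. If some $c^{*}$ realised $\{\varphi(x,a_{j}):j<N\}$, then for each $j<N$ there would be $i(j)<n$ with $c^{*}\models\psi_{i(j)}(x,b_{i(j)}^{j})$; by pigeonhole some fixed index $i^{*}<n$ is selected at least $k_{i^{*}}$ times, contradicting $k_{i^{*}}$-inconsistency. Hence $\{\varphi(x,a_{j}):j<N\}$ is inconsistent, and the indiscernible sequence $\langle a_{j}\rangle$ in $\tp(a/A)$ witnesses that $\varphi(x,a)$ divides over $A$.

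The main obstacle behind this short combinatorial argument is the existence of a strict Morley sequence in $\tp(c/A)$ over an arbitrary extension base $A$, together with Kim's Lemma for it; these are the technical heart of \cite{kachernikov}. The extension base hypothesis is used precisely there, supplying a global non-forking extension of $\tp(c/A)$ which is then upgraded, by an inductive construction alternating non-forking to the left with non-dividing of the past over each newly added term, into a strict non-forking sequence. The NIP hypothesis enters essentially in verifying that any such strict (non-forking or merely independent) sequence is actually a witness, through the bounded alternation behaviour of formulas along indiscernible sequences; this is Shelah's Lemma from \cite{Sh783}, which the present paper will revisit and refine.
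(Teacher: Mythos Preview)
The paper does not give a proof of this statement; it is simply cited as a fact from \cite{kachernikov}. Your sketch is a correct outline of the argument there: pass to the combined tuple $c=(a,b_0,\ldots,b_{n-1})$, build a strict Morley sequence in $\tp(c/A)$ (this is where the extension-base hypothesis and the real work of \cite{kachernikov} enter, via the broom lemma), invoke Kim's Lemma to get $k_i$-inconsistency of each $\{\psi_i(x,b_i^j):j<\omega\}$, and finish by pigeonhole along the indiscernible sequence $\langle a_j\rangle$. One small point on your final paragraph: what you attribute to Shelah's Lemma is actually what this paper calls Kim's Lemma (Shelah's Lemma is the implication ``strictly non-forking $\Rightarrow$ strictly independent''), and the result in \cite{kachernikov} in fact holds for all $\NTPT$ theories, so dependence is not the precise hypothesis used at that step.
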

Note that if $A$ is not an extension base, then the conclusion of
the fact above can not possibly hold (since no type over $A$ divides
over $A$); so the theorem tells us that in \NTP{} theories this
is the only obstacle. 

We will refer to this fact as ``forking = dividing over $A$'' (or
omit $A$ when it is clear from the context).\textcolor{green}{{} }
\begin{cor}
\label{cor:Left Extension}\cite{kachernikov} (T \NTP{}) Let $A$
be an extension base. Then forking has left extension over $A$, which
means: if $a\ind_{A}b$ then for all $c$ there is $c'\equiv_{Aa}c$
such that $ac'\ind_{A}b$.
\end{cor}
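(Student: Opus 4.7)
The plan is to produce the desired $c'$ by a compactness argument, with the hard work happening at a single auxiliary formula. I want $c' \equiv_{Aa} c$ with $\tp(ac'/Ab)$ non-forking over $A$. Equivalently, I want to realize the partial type $\tp(c/Aa) \cup \{\neg\varphi(a,y,b) : \varphi(x,y,b) \in L(Ab),\ \varphi(x,y,b)\text{ forks over }A\}$. Since the family of formulas forking over $A$ is closed under finite disjunctions, compactness reduces everything to showing: for every $\chi(y,a) \in \tp(c/Aa)$ and every $\varphi(x,y,b)$ forking over $A$, one has $\chi(y,a) \not\vdash \varphi(a,y,b)$.

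Suppose for contradiction that $\chi(y,a) \vdash \varphi(a,y,b)$ for some such pair. Because $A$ is an extension base, forking equals dividing over $A$, so I can pick an $A$-indiscernible sequence $\langle b_i : i < \omega\rangle$ with $b_0 = b$ such that $\{\varphi(x,y,b_i) : i < \omega\}$ is $k$-inconsistent for some $k$. Now introduce the auxiliary $L(A)$-formula
\[
\psi(x,z) := \bigl(\exists y\,\chi(y,x)\bigr)\ \wedge\ \bigl(\forall y\,(\chi(y,x) \to \varphi(x,y,z))\bigr).
\]
Then $\models \psi(a,b)$: the existential conjunct is witnessed by $c$, and the universal conjunct is exactly the contradiction hypothesis. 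So $\psi(x,b) \in \tp(a/Ab)$, and from $a \ind_A b$ together with forking $=$ dividing, $\psi(x,b)$ does not divide over $A$.

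On the other hand, I claim $\{\psi(x,b_i) : i < \omega\}$ is inconsistent, contradicting the previous sentence. Suppose $a^*$ realized it; the first conjunct (at any $i$) says $\chi(y,a^*)$ is consistent, so fix $c^{**}$ with $\models \chi(c^{**}, a^*)$. The second conjunct, applied at each $i$, then yields $\models \varphi(a^*, c^{**}, b_i)$ for all $i$, so the single pair $(a^*, c^{**})$ realizes all of $\{\varphi(x,y,b_i)\}$, violating $k$-inconsistency. This contradiction shows the hypothesis must fail, so the required $c'$ exists.

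The only genuinely delicate step is engineering $\psi$. The naive choice $\forall y (\chi(y,x) \to \varphi(x,y,z))$ would be vacuously satisfied by any $a^*$ for which $\chi(y,a^*)$ is inconsistent, so its $k$-inconsistency on the $b_i$'s would not follow from that of $\varphi$. Adding the $\exists y\,\chi(y,x)$ conjunct rules this out while preserving membership in $\tp(a/Ab)$, linking the non-dividing of $\psi(x,b)$ to the dividing of $\varphi(x,y,b)$ and closing the argument.
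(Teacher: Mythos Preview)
Your argument is correct. The paper does not actually supply a proof of this corollary; it is simply cited from \cite{kachernikov}, so there is no in-paper argument to compare against. Your compactness reduction is the natural one (using that forking formulas over $A$ form an ideal, and that forking equals dividing over the extension base $A$), and the auxiliary formula $\psi(x,z)=\bigl(\exists y\,\chi(y,x)\bigr)\wedge\bigl(\forall y\,(\chi(y,x)\to\varphi(x,y,z))\bigr)$ does exactly what you need: it lies in $\tp(a/Ab)$, hence does not divide over $A$, yet along the dividing sequence $\langle b_i\rangle$ it inherits $k$-inconsistency from $\varphi$, a contradiction.

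It is worth noting that there is a slightly shorter route, closer in spirit to how the result is usually proved (and to the tools already recorded in the paper). After reducing to $\chi(y,a)\vdash\varphi(a,y,b)$ with $\varphi(x,y,b)$ dividing over $A$ witnessed by an $A$-indiscernible $\langle b_i\rangle$ with $b_0=b$, use Fact~\ref{fac:dividing} (the characterization of non-dividing) directly: since $a\ind_A^d b$, we may replace $\langle b_i\rangle$ by some $\langle b_i'\rangle\equiv_{Ab}\langle b_i\rangle$ which is indiscernible over $Aa$ and still has $b_0'=b$. Indiscernibility over $Aa$ then gives $\chi(y,a)\vdash\varphi(a,y,b_i')$ for every $i$, so $c$ itself satisfies $\varphi(a,c,b_i')$ for all $i$, contradicting $k$-inconsistency of $\{\varphi(x,y,b_i')\}$. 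This avoids building the auxiliary $\psi$ altogether. Your approach, by contrast, packages the same content into a single formula in $\tp(a/Ab)$; it is a bit more hands-on, but perfectly valid, and your remark about why the existential conjunct is needed is exactly right.
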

Finally, we define the class of resilient theories which was introduced
in \cite{cheBenya}. 
\begin{defn}
A theory $T$ is called \emph{resilient }if for any indiscernible
sequence, $I=\left\langle a_{i}\left|\, i\in\mathbb{Z}\right.\right\rangle $
and formula $\varphi\left(x,y\right)$, if $\varphi\left(x,a_{0}\right)$
divides over $a_{\neq0}$, then $\varphi\left(x,I\right)$ is inconsistent. \end{defn}
\begin{fact}
\cite{cheBenya} Simple and dependent theories are resilient. Resilient
theories are \NTP{}. 
\end{fact}

\section{Basic notions}

We now move on to the definitions of the main notions studied in this
paper. The following definitions appear in \cite{Sh783}:
\begin{defn}
\label{def:strictnonfork}We say that $\tp\left(a/Ab\right)$ \emph{strictly
does not fork }over $A$ (we write $a\ind_{A}^{\st}b$) if there is
a global extension $p$ of $\tp\left(a/Ab\right)$ which does not
fork over $A$ (so in particular $a\ind_{A}b$), and for any $B\supseteq Ab$,
if $c\models p|_{B}$ then $\tp\left(B/Ac\right)$ does not divide%
\footnote{It is a matter of taste whether it should be ``divide'' or ``fork''.
For all the results here there is no difference except Lemma \ref{lem:stind-eqdef}
where minor changes are required. %
} over $A$ (i.e., $B\ind_{A}^{d}c$). \end{defn}
\begin{rem}
If $a\ind_{A}^{\st}b$ and $c,d\in A$ then $ac\ind_{A}^{\st}bd$
($c,d$ may be empty).\end{rem}
\begin{lem}
\label{lem:stind-eqdef}  The following are equivalent for a tuple
$a$ and sets $A\subseteq B$:
\begin{enumerate}
\item $a\ind_{A}^{\st}B$ 
\item For every $c$, there is some $c'\equiv_{B}c$ such that $Bc'\ind_{A}^{d}a$
and $a\ind_{A}Bc'$.
\item For every $\left(\left|B\right|+\left|T\right|\right)^{+}$-saturated
model $M$ containing $B$, there is some $M'\equiv_{B}M$ such that
$M'\ind_{A}^{d}a$ and $a\ind_{A}M'$.
\item There is some $\left(\left|B\right|+\left|T\right|\right)^{+}$-saturated
model $M$ containing $B$, such that $M\ind_{A}^{d}a$ and $a\ind_{A}M$.
\item For every finite sub-tuple $a_{0}$ of $a$ and finite subset $B_{0}$
of $B$, $a_{0}\ind_{A}^{\st}B_{0}$. 
\item There is some $\left(\left|A\right|+\left|T\right|\right)^{+}$-saturated
model $M$ containing $B$, such that $M\ind_{A}^{d}a$ and $a\ind_{A}M$.
\end{enumerate}
\end{lem}
\begin{proof}
Clearly (1) $\Rightarrow$ (2) (by applying an automorphism over $A$)
and (2) $\Rightarrow$ (3) $\Rightarrow$ (4).

(4) $\Rightarrow$ (1): We must show that there is a global type $p\left(x\right)$
extending $r\left(x\right):=\tp\left(a/B\right)$ such that $p\left(x\right)$
does not fork over $A$ and $\neg\psi\left(d,x\right)\in p\left(x\right)$
whenever $\psi\left(y,a\right)$ divides over $A$. In other words,
the following set is consistent: 
\begin{eqnarray*}
r\left(x\right) & \cup & \left\{ \neg\varphi\left(x,c\right)\left|\,\varphi\in L\left(A\right),\,\varphi\left(x,c\right)\mbox{ forks over }A\right.\right\} \\
 & \cup & \left\{ \neg\psi\left(d,x\right)\left|\,\psi\in L\left(A\right),\,\psi\left(y,a\right)\mbox{ divides over }A\right.\right\} .
\end{eqnarray*}
If not, $r\left(x\right)\models\bigvee\varphi_{i}\left(x,c_{i}\right)\vee\bigvee\psi_{j}\left(d_{j},x\right)$
for such formulas. But then by saturation of $M$ we may assume $c_{i},d_{j}\in M$
contradicting (4). 

Easily (1) implies (5). The converse follows from the same argument
as above, i.e., we only need to show that the set above is consistent,
and by (5) every finite subset is. 

Finally, (6) follows from (3), and implies (5) by (4). \end{proof}
\begin{cor}
If $M$ is $\left|A\right|^{+}$-saturated, then for any $a$, $a\ind_{A}^{\st}M$
iff {[}$M\ind_{A}^{d}a$ and $a\ind_{A}M${]}. Hence if forking =
dividing over $A$, $\ind_{A}^{\st}$ is symmetric for $\left|A\right|^{+}$-saturated
models. \end{cor}
\begin{proof}
Clearly left implies right. Conversely, see clause (6) of Lemma \ref{lem:stind-eqdef}. 
\end{proof}
Recall that a sequence $I=\langle a_{i}\rangle$ is called \emph{non-forking}
over a set $A$ if $a_{i}\ind_{A}a_{<i}$ for all $i$. It is called
\emph{Morley }over $A$ if it is both indiscernible over $A$ and
non-forking over $A$. By analogy, we define:
\begin{defn}
\label{def:strictMorley}A sequence $I=\langle a_{i}\rangle$ is called
\emph{strictly non-forking} over a set $A$ if $a_{i}\ind_{A}^{\st}a_{<i}$
for all $i$. It is called \emph{a strict Morley sequence }over $A$
if it is both indiscernible over $A$ and strictly non-forking over
$A$. It is called \emph{totally strict }if in addition for all $n<\omega$,
the sequence of increasing $n$-tuples from it is also strict Morley
(i.e., if the order type of $I$ is $\omega$, then 
\[
\left\langle \left(a_{j}\left|\, ni<j<n\left(i+1\right)-1\right.\right)\left|\, i<\omega\right.\right\rangle 
\]
 is strict Morley). 
\end{defn}
Of course, if $I$ is a strict Morley sequence over $A$, then it
is Morley over $A$.
\begin{example}
\label{exa:rationals}We will see later (after Corollary \ref{cor:2way})
that for $T=Th\left(\mathbb{Q},<\right)$, every non-constant $A$-indiscernible
sequence of singletons is totally strict over $A$. 
\end{example}
The notion of strict non-forking fits well with dependent theories.
The analog for \NTP{} is:
\begin{defn}
We say that $\tp\left(a/Ab\right)$ is \emph{strictly invariant} over
$A$ (we write $a\ind_{A}^{\ist}b$) if there is a global extension
$p$ of $\tp\left(a/Ab\right)$ which is Lascar invariant over $A$
(i.e., does not strongly split over $A$), and for any $B\supseteq Ab$,
if $c\models p|_{B}$ then $\tp\left(B/Ac\right)$ does not divide
over $A$.
\end{defn}
Observe that in general, if $a\ind_{A}^{\ist}b$ then $a\ind_{A}^{\st}b$
and the converse is true in the context of NIP in light of Fact \ref{fac:strongsplit}. 

We can define a strictly invariant sequence, strictly invariant Morley
sequences and totally strict invariant sequences using $\ind^{\ist}$
instead of $\ind^{\st}$, and an analog to Lemma \ref{lem:stind-eqdef}
holds. 

Finally, we give the main new definition of this paper:
\begin{defn}
\label{def:strictind}Let $B$ be a set, and $\mathcal{J}$ a set
of tuples from $\C$. We say that $\mathcal{\mathcal{J}}$ is \emph{strictly
independent }over $B$ if the following holds: for every sequence
$\mathcal{I}=\sequence{I_{a}}{a\in\mathcal{J}}$ of infinite $B$-indiscernible
sequences such that for each $a\in\mathcal{J}$, $a\in I_{a}$, there
is $\mathcal{I}'=\sequence{I'_{a}}{a\in\mathcal{J}}$ satisfying:
\begin{itemize}
\item $I_{a}\equiv_{Ba}I'_{a}$ for all $a\in\mathcal{J}$.
\item $I'_{a}$ is indiscernible over $BI'_{\neq a}$.
\end{itemize}
\end{defn}
We will refer to the second item in the definition above as ``$I'_{a}$
are mutually indiscernible over $B$'', or ``$I'_{a}$ are mutually
$B$-indiscernible''.
\begin{rem}
It is clear from the definition that if $\left\{ a_{i}\left|\, i<\lambda\right.\right\} $
is strictly independent over $B$, and $b_{i}\in B$ for $i<\lambda$,
then $\left\{ a_{i}b_{i}\left|\, i<\lambda\right.\right\} $ is also
strictly independent. It is also easy to see that it is enough to
consider $\omega$-sequences in $\calI$ and assume that each $I_{a}$
starts with $a$. \end{rem}
\begin{claim}
\label{cla:making mutually indiscernible}Suppose that $\sequence{I_{i}}{i<\lambda}$
is a sequence of indiscernible sequences over a set $B$ such that
$I_{i}=\sequence{a_{i,j}}{j<\omega}$, $p=\tp\left(\sequence{a_{i,0}}{i<\lambda}/B\right)$,
and: 
\begin{itemize}
\item [(*)]For all $\eta:\lambda\to\omega$, $\sequence{a_{i,\eta\left(i\right)}}{i<\omega}\models p$. 
\end{itemize}
Then there are mutually indiscernible sequences $\sequence{I_{i}'}{i<\omega}$
such that $I_{i}'\equiv_{a_{i,0}B}I_{i}$.\end{claim}
\begin{proof}
By Ramsey and compactness (see e.g., \cite[Lemma 5.1.3]{TentZiegler})
we can extract mutually indiscernible sequences $\bar{I}''=\left\langle I_{i}''\left|\, i<\lambda\right.\right\rangle $
such that $\sequence{a_{i,0}}{i<\omega}\equiv_{B}\sequence{a_{i,0}''}{i<\omega}$
(where $I_{i}''$ starts with $a_{i,0}''$) and such that $(*)$ above
holds for $\bar{I}''$. Now apply an automorphism taking the first
column to $\left\langle a_{i,0}\left|\, i<\lambda\right.\right\rangle $. 
\end{proof}
One of the motivations for defining strict independence, and for investigating
its connection to strict non-forking is the following lemma due to
Shelah:
\begin{lem}
\label{lem:(Shelah's-Lemma)}(Shelah's Lemma, \cite[Claim 5.13]{Sh783})
Let $\mathcal{J}=\left\langle a_{i}\left|\, i<\lambda\right.\right\rangle $
be a strictly invariant sequence over a set $A$. Then $\mathcal{J}$
is strictly independent. \end{lem}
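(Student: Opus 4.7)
The plan is to prove the lemma by induction on $\lambda$. For $\lambda$ a limit, mutual $A$-indiscernibility of $\{I'_i : i < \lambda\}$ together with the conditions $I'_i \equiv_{A a_i} I_i$ forms a partial type whose finite fragments involve only finitely many indices, so the limit case reduces to finite $\lambda$ by compactness. At the successor step $\lambda = \mu + 1$, the induction hypothesis applied to the strictly non-forking sequence $\langle a_i : i < \mu \rangle$ yields mutually $A$-indiscernible sequences $\{I'_i : i < \mu\}$ with $I'_i \equiv_{A a_i} I_i$, and the task reduces to incorporating a further sequence $I'_\mu$ equivalent to $I_\mu$ over $A a_\mu$.

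To do this I would invoke Lemma~\ref{lem:stind-eqdef}(2) on $a_\mu \ind_A^{\st} a_{<\mu}$ with parameter $c = \bar I'_{<\mu}$ (the concatenation of the $I'_i$'s), obtaining an $A a_{<\mu}$-conjugate $\bar I''_{<\mu}$ of $\bar I'_{<\mu}$ such that $A a_{<\mu} \bar I''_{<\mu} \ind_A^d a_\mu$ and $a_\mu \ind_A A a_{<\mu} \bar I''_{<\mu}$. Since the conjugation fixes $A a_{<\mu}$, each $I''_i$ still satisfies $I''_i \equiv_{A a_i} I_i$, and mutual $A$-indiscernibility is preserved, so one may relabel $I''_i$ back to $I'_i$. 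Then applying the symmetric form of Fact~\ref{fac:dividing} to $\bar I'_{<\mu} \ind_A^d a_\mu$ and to the $A$-indiscernible sequence $I_\mu$ containing $a_\mu$, one extracts $I'_\mu \equiv_{A a_\mu} I_\mu$ indiscernible over $A \bar I'_{<\mu}$.

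The main obstacle is then to verify that each $I'_i$ (for $i < \mu$) remains indiscernible over $A I'_\mu$ together with the other $I'_j$'s. From $a_\mu \ind_A \bar I'_{<\mu}$ and preservation of indiscernibility, each $I'_i$ is indiscernible over $A a_\mu$ together with the other $I'_j$'s; since $I'_\mu$ is $A \bar I'_{<\mu}$-indiscernible, an automorphism over $A \bar I'_{<\mu}$ (which fixes every $I'_j$ with $j < \mu$) transfers this statement to the same one with $a_\mu$ replaced by any single element of $I'_\mu$. The delicate step is combining these individual single-element statements into indiscernibility of $I'_i$ over the entire sequence $I'_\mu$, equivalently over every finite tuple from $I'_\mu$. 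I expect this is where NIP is essentially used: one should arrange a long initial portion of $I'_\mu$ to itself be strictly non-forking over $A \bar I'_{<\mu}$ (extending the strict non-forking chain by one step), so that iterated strict non-forking gives $\bar b \ind_A \bar I'_{<\mu}$ for every finite initial tuple $\bar b$ of $I'_\mu$; preservation of indiscernibility then yields $I'_i$ indiscernible over $A \bar b$ and the other $I'_j$'s, which by $A \bar I'_{<\mu}$-indiscernibility of $I'_\mu$ is equivalent to $I'_i$ being indiscernible over $A I'_\mu$ and the other $I'_j$'s. Making this upgrade rigorous, while preserving the constraint $I'_\mu \equiv_{A a_\mu} I_\mu$, is the main technical hurdle of the argument.
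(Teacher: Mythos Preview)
Your inductive setup and the first two steps (conjugating $\bar I'_{<\mu}$ and then extracting $I'_\mu$ indiscernible over $A\bar I'_{<\mu}$ via Fact~\ref{fac:dividing}) match the paper exactly. The gap is precisely the one you flag: getting each $I'_i$ with $i<\mu$ to be indiscernible over the \emph{entire} sequence $I'_\mu$, not just over each single element of it. Your proposed fix, arranging a long initial segment of $I'_\mu$ to be strictly non-forking over $A\bar I'_{<\mu}$, cannot work in general while preserving $I'_\mu\equiv_{Aa_\mu}I_\mu$: the sequence $I_\mu$ is an \emph{arbitrary} $A$-indiscernible sequence starting with $a_\mu$, and there is no reason its type over $A$ should admit a strictly non-forking realization over $A\bar I'_{<\mu}$. (Also, NIP plays no role here; the paper's argument works in any theory, as noted in Section~\ref{sec:My-weight-problem}.)

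The paper sidesteps the obstacle by weakening the inductive invariant. Rather than carrying full mutual indiscernibility through the induction, it proves only $\tfrac{1}{2}$-strict independence: $I'_i$ is indiscernible over $AI'_{<i}a_{>i}$, i.e.\ over the earlier \emph{sequences} but only the later \emph{first elements}. This propagates painlessly at the successor step, since from $a_\mu\ind_A \bar I'_{<\mu}$ one gets $a_\mu\ind_{AI'_{<i}a_{>i}}I'_i$ by base change, and preservation of indiscernibility adds the single element $a_\mu$ to the base. The upgrade from $\tfrac{1}{2}$-strict independence to full strict independence is then done once, globally, by a separate argument: the $\tfrac{1}{2}$-condition forces all vertical paths $\langle c_i : c_i\in I'_i\rangle$ to have the same type as $\langle a_i\rangle$, and one extracts a mutually indiscernible array via Ramsey and compactness, then applies an automorphism to restore the first column. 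So the missing idea is this two-stage decomposition; your direct induction on mutual indiscernibility is asking for too much at each step.
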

\begin{proof}
\renewcommand{\qedsymbol}{} 

For the proof define:
\begin{defn*}
Let $A$ be a set, and $\mathcal{J}=\left\langle a_{i}\left|\, i<\lambda\right.\right\rangle $
a sequence of tuples. We say that $\mathcal{J}$ is \emph{almost}
\emph{strictly independent }over $A$ if the following holds: for
every set $\mathcal{I}=\left\{ I_{i}\left|\, i<\lambda\right.\right\} $
of $A$-indiscernible sequences of length $\omega$ such that $I_{i}$
starts with $a_{i}$ , there is $\mathcal{I}'=\left\{ I'_{i}\left|\, i<\lambda\right.\right\} $
satisfying:
\begin{itemize}
\item $I_{i}\equiv_{Aa_{i}}I'_{i}$ for all $i<\lambda$.
\item $I'_{i}$ is indiscernible over $AI'_{<i}a_{>i}$.
\end{itemize}
\end{defn*}
For simplicity of notation, assume without loss of generality that
$A=\emptyset$. 

By Claim \ref{cla:making mutually indiscernible} (alternatively,
see Lemma 2.4 in \cite{OnUs2}), it follows that if $\mathcal{J}$
is almost strictly independent (as a sequence) then it is strictly
independent (as a set). Hence it is enough to prove:
\begin{claim*}
$\mathcal{J}$ is almost strictly independent. \end{claim*}
\begin{proof}
\renewcommand{\qedsymbol}{$\square$} Suppose $\mathcal{I}=\left\{ I_{i}\left|\, i<\lambda\right.\right\} $
is a set of indiscernible sequences such that $I_{i}$ starts with
$a_{i}$. By compactness we may assume $\lambda=n<\omega$. The proof
is by induction on $n$. Suppose we have $I_{i}'$ for $i<n$ and
consider $n+1$. Perhaps changing $I_{i}'$, we may assume that $a_{n}\ind^{\ist}I_{<n}'$.
Hence also $I_{<n}'\ind^{d}a_{n}$. By Fact \ref{fac:dividing}, there
is an indiscernible sequence $I'_{n}$ such that $I_{n}\equiv_{a_{n}}I_{n}'$
and $I_{n}'$ is indiscernible over $I_{<n}'$. Now, $I_{n}'$ is
indiscernible over $I_{<n}'$ by construction. For every $i<n$, $I_{i}'$
is indiscernible over $I_{<i}'a_{>i}$ by the induction hypothesis
(where $a_{>i}=a_{i+1}\ldots a_{n-1}$). As $a_{n}\ind^{i}I'_{<n}$,
, it follows that $a_{n}\ind_{I_{<i}'a_{>i}}^{i}I_{i}'$, so by preservation
of indiscernibility, $I_{i}'$ is indiscernible over $I_{<i}'a_{>i}a_{n}$. 
\end{proof}
\end{proof}
Strict non-forking, and especially strict Morley sequences have already
proved very useful due to the following observation, shown independently
in \cite{Us1} and \cite{kachernikov}%
\footnote{In \cite{kachernikov} it is stated for strict invariant sequences,
but the proof there really uses Shelah's lemma and Lemma \ref{lem:(Kim's-Lemma)}
as stated here.%
} (in fact it is implicit in \cite[Claim 5.8]{Sh783}): 
\begin{lem}
\label{lem:(Kim's-Lemma)}(``Kim's Lemma'') ($T$ \NTP{}) If $\left\langle a_{i}\left|\, i<\left|T\right|^{+}\right.\right\rangle $
is a strictly independent sequence over $B$, and $\varphi_{i}\left(x,a_{i}\right)$
divides over $B$, then $\left\{ \varphi_{i}\left(x,a_{i}\right)\left|\, i<\left|T\right|^{+}\right.\right\} $
is inconsistent. In particular, this is true if $\left\langle a_{i}\left|\, i<\left|T\right|^{+}\right.\right\rangle $
is strictly invariant (or strictly non-forking in case of NIP). 
\end{lem}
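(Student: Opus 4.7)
The plan is to argue by contradiction. Suppose, toward a contradiction, that $c$ realizes $\{\varphi_{i}(x,a_{i}):i<|T|^{+}\}$.

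Two applications of pigeonhole on the $|T|^{+}$-sized index set (one over the at most $|T|$ many formulas $\varphi_{i}$, one over the countably many possible integers witnessing inconsistency) reduce us to a subfamily of size $|T|^{+}$ on which all $\varphi_{i}$ equal a single formula $\varphi$, and a single $k<\omega$ works: for each $i$ in this subfamily, the dividing of $\varphi(x,a_{i})$ over $B$ is witnessed by some $B$-indiscernible sequence $I_{i}$ starting with $a_{i}$ such that $\varphi(x,I_{i})$ is $k$-inconsistent.

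Next, I apply strict independence of $\{a_{i}\}$ over $B$ to the family $\{I_{i}\}$, obtaining mutually $B$-indiscernible sequences $\{I'_{i}\}$ with $I'_{i}\equiv_{Ba_{i}}I_{i}$. Since the type over $Ba_{i}$ is preserved, each $I'_{i}$ still starts with $a_{i}$ and $\varphi(x,I'_{i})$ remains $k$-inconsistent. Write $I'_{i}=(b_{j}^{i})_{j<\omega}$ with $b_{0}^{i}=a_{i}$.

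The contradiction should now follow from combining NIP with mutual indiscernibility. By mutual indiscernibility, for every $i$ the sequence $I'_{i}$ is indiscernible over $B\cup I'_{\neq i}$; iterating this one row at a time, together with compactness, shows that for any choice of indices $(j_{i})_{i}$ the tuple $(b_{j_{i}}^{i})_{i}$ has the same $B$-type as $(a_{i})_{i}$. Hence an automorphism in $\Aut(\C/B)$ sends $c$ to an element realizing $\bigwedge_{i}\varphi(x,b_{j_{i}}^{i})$, so every ``column'' of the array is consistently realized. The main obstacle is to turn this family of column realizations into a genuine violation of $k$-inconsistency on some row. My plan is to extract from $\langle I'_{i}\rangle_{i<|T|^{+}}$ a sub-array that remains mutually $B$-indiscernible and is in addition indiscernible over $Bc$ (this is where the length $|T|^{+}$ is used, beyond mere pigeonhole, via Ramsey and compactness), and then invoke NIP, which bounds the alternation of $\varphi(c,-)$ along any $B$-indiscernible sequence by a number depending only on $\varphi$. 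The goal is that NIP together with the $Bc$-indiscernibility of the sub-array forces some row of the extract to be itself $Bc$-indiscernible; since $c$ satisfies $\varphi(x,-)$ on the first element of that row, it would then satisfy $\varphi(x,-)$ on the whole row, directly contradicting the $k$-inconsistency of $\varphi(x,I'_{i})$.
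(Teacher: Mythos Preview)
Your argument is essentially complete one step before you think it is. Once you have mutually $B$-indiscernible $I'_{i}=(b^{i}_{j})_{j}$ with $b^{i}_{0}=a_{i}$, $k$-inconsistent rows $\varphi(x,I'_{i})$, and the observation that for \emph{any} choice $(j_{i})_{i}$ the tuple $(b^{i}_{j_{i}})_{i}$ has the same $B$-type as $(a_{i})_{i}$ (so $\{\varphi(x,b^{i}_{j_{i}}):i\}$ is consistent), you have exhibited the tree property of the second kind for $\varphi$ (restrict to any $\omega$ rows). Since dependent theories are $\NTPT$, this is already the desired contradiction. This is the standard argument the paper has in mind; the lemma is stated there without proof, with references to \cite{Us1,kachernikov,Sh783}.

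Your proposed final step, by contrast, has real gaps. First, you cannot in general extract from only $|T|^{+}$ rows a subsequence indiscernible over $Bc$: Ramsey/Erd\H{o}s--Rado would require the index set to dominate $|B|+|T|$, and there is no bound on $|B|$; moreover the $a_{i}$ (hence the rows $I'_{i}$) need not even have the same type over $B$, so no indiscernible sequence of rows exists to extract. Second, the implication you are reaching for --- from ``the sequence of rows is $Bc$-indiscernible and the rows are mutually $B$-indiscernible'' to ``some row is $Bc$-indiscernible'' --- is not a consequence of dependence; the statement that for every finite $c$ almost every row of a mutually indiscernible array stays indiscernible over $c$ is essentially a characterization of \emph{strong} dependence. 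So drop the last paragraph and stop at the $\NTPT$ contradiction; what you call ``every column'' already means every vertical path, and that is exactly what is needed.
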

It is therefore interesting to understand strict Morley sequences
and strictly non-forking extensions in general. From the definitions
it is not even clear that such extensions exist. However, a few existence
results have been shown:
\begin{fact}
\label{fac:strict_exist}$ $
\begin{itemize}
\item \cite{kachernikov}($T$ \NTP{}) Let $A$ be an extension base for
$\ind^{i}$. Then $A$ is a strict invariance extension base. That
is, every type over $A$ has a global strictly invariant extension.
\item ($T$ \NTP{}) Let $A$ be an extension base. Then $A$ is a strict
extension base. That is, every type over $A$ has a global strictly
non-forking extension. {[}This is not mentioned explicitly but follows
from \cite[Proposition 3.7]{kachernikov}.{]} 
\item \cite{Us1} Let $p$ be a global type which is both an heir and non-forking
over $A$. Then $p$ is a global strictly invariant extension of $p|_{A}$.
\end{itemize}
\end{fact}
\begin{rem}
\label{remark strict-morley-sequences-exist}Note that if $M\supseteq A$
is a model, and $p$ is a global type that is Lascar invariant $A$,
then $p$ is invariant over $M$. We can conclude that under \NTP{}
for every $\ind^{i}$-extension base $A$ (under NIP, every extension
base) and a tuple $a$, there is a strictly invariant Morley sequence
over $A$ starting with $a$ (if $p$ is a global strictly invariant
type containing $\tp\left(a/A\right)$, then find some model $M\supseteq A$
and a conjugate (over $A)$ $q$ of $p$ such that $a\models q|_{M}$
and define $a_{i}\models q|_{Ma_{<i}}$ to get a strict invariant
Morley sequence). 

Similarly, under \NTP{}, if $A$ is an extension base, then for any
tuple $a$ there is a strict Morley sequence starting with $a$ over
$A$: take a global strict non-forking type $p\left(x\right)$ extending
$\tp\left(a/A\right)$ and generate a long enough sequence using $p$,
then use the ``Morley-Erd\H{o}s-Rado'' theorem (see e.g., \cite[Proposition 1.6]{casanovas-simple})
in order to ``extract'' an indiscernible sequence. 
\end{rem}

\section{\label{sec:Basic-properties}The main lemma}
\begin{defn}
\label{def:witness}Call an infinite sequence $I$ a \emph{witness
for $a$ over $A$}, if:
\begin{enumerate}
\item For all $b\in I$ we have $a\equiv_{A}b$.
\item For any formula $\varphi\left(x,y\right)$ over $A$, if $\varphi\left(x,a\right)$
divides over $A$, then $\varphi(x,I_{0})$ is inconsistent for every
countable $I_{0}\subseteq I$. 
\end{enumerate}
Say that $I$ is an \emph{indiscernible witness} \emph{for} $a$ \emph{over
$A$} if $I$ is a witness for $a$ over $A$, and it is indiscernible
over $A$. \end{defn}
\begin{rem}
Note that if $I$ is a witness for some $a$ over $A$, then it is
a witness for every $a\in I$, and, in fact, for any $a$ realizing
the type of some (any) element of $I$ over $A$. So we will often
simply say ``$I$ is a witness over $A$'', omitting $a$. \end{rem}
\begin{example}
\label{exa:strict Morley is a witness}($T$ \NTP{}) By Lemma \ref{lem:(Kim's-Lemma)},
strictly independent sequences of tuples of the same type over $A$
are witnesses over $A$. \end{example}
\begin{obs}
\label{obs:direct consequence of Kim} ($T$ \NTP{}) Let $A$ be
an extension base. If there exists a witness $I=\left\langle b_{i}\left|\, i<\omega\right.\right\rangle $
for $b=b_{0}$ over $A$, which is indiscernible over $Aa$, then
$a\ind_{A}b$.\end{obs}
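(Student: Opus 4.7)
The plan is to prove this directly by contradiction, combining the witness property with indiscernibility over $Aa$ and the extension-base hypothesis. The argument should be short: the witness property forces a dividing formula $\varphi(x,b)$ in $\tp(a/Ab)$ to be inconsistent when reinterpreted along $I$, while indiscernibility of $I$ over $Aa$ produces $a$ as a realization of that same formula set. The only mildly delicate point is bookkeeping around what ``witness for $b$'' means, but by the remark just before the observation we can take $b$ to be an element of $I$ (we may even assume it is the first one), since all elements of $I$ realize $\tp(b/A)$ and the hypothesis that $I$ is indiscernible over $Aa$ is invariant under this choice.

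Concretely, suppose toward contradiction that $a \nind_A b$. Since $A$ is an extension base, forking over $A$ equals dividing over $A$ (the Chernikov--Kaplan fact quoted as ``forking = dividing''), so there is some formula $\varphi(x,y) \in L(A)$ with $\varphi(x,b) \in \tp(a/Ab)$ and $\varphi(x,b)$ dividing over $A$. Since $I$ is a witness for $b$ over $A$ and all elements of $I$ realize $\tp(b/A)$, Definition~\ref{def:witness}(2) applied to any countable subset $I_0 \subseteq I$ shows that $\{\varphi(x,b') : b' \in I_0\}$ is inconsistent; by compactness, some finite subset is already inconsistent.

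On the other hand, we have $a \models \varphi(x,b)$ and $b \in I$, so by indiscernibility of $I$ over $Aa$ we get $a \models \varphi(x,b')$ for every $b' \in I$. Thus $\{\varphi(x,b') : b' \in I\}$ is consistent, realized by $a$, contradicting the previous paragraph. Hence $a \ind_A b$.

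There is essentially no obstacle here: the proof is a one-line application of the witness definition combined with ``forking = dividing'' over extension bases. The only thing to verify carefully is that the witness hypothesis truly guarantees inconsistency of $\{\varphi(x,b') : b' \in I\}$ whenever $\varphi(x,b)$ divides over $A$ for some distinguished $b$ with $b \equiv_A b'$ (all $b' \in I$); this is immediate from the remark following Definition~\ref{def:witness}, which tells us that being a witness is a property of the sequence together with the common type over $A$, not of any single distinguished element.
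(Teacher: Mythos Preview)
Your proof is correct and follows essentially the same approach as the paper's: assume $a\nind_A b$, use forking = dividing over the extension base $A$ to extract a dividing formula $\varphi(x,b)\in\tp(a/Ab)$, then derive a contradiction between the witness property (which makes $\{\varphi(x,b_i)\}$ inconsistent) and indiscernibility of $I$ over $Aa$ (which makes $a$ realize all $\varphi(x,b_i)$). Your treatment is simply more explicit about the bookkeeping issue of whether $b\in I$, which the paper handles silently.
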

\begin{proof}
If $a\nind_{A}b$, then (since forking = dividing over $A$) there
is some formula $\varphi\left(x,y\right)$ over $A$ such that $\varphi\left(a,b\right)$
holds and $\varphi\left(x,b\right)$ divides over $A$. So, by definition,
$I$ witnesses it. But this is a contradiction --- by indiscernibility
over $Aa$, $\varphi\left(a,b_{i}\right)$ holds for all $i<\alpha$.
\end{proof}
The following lemma is quite easy, but incredibly useful. In a sense
it is an analogue of Observation \ref{obs:direct consequence of Kim}
for $\ind^{\st}$ and $\ind^{\ist}$. 
\begin{lem}
\label{lem:main lemma}(The main Lemma) Assume that $T$ is \NTP{}
and let $A$ be an extension base.

Suppose $I=\left\langle a_{i}\left|\, i<\omega\right.\right\rangle $
is a witness for $a=a_{0}$ over $A$. Then, if $I\ind_{A}b$ and
$I$ is indiscernible over $Ab$, then $a\ind_{A}^{\st}b$. 

Suppose $I=\left\langle a_{i}\left|\, i<\omega\right.\right\rangle $
is a witness for $a=a_{0}$ over $A$. Then, if $I\ind_{A}^{i}b$
and $I$ is indiscernible over $Ab$, then $a\ind_{A}^{\ist}b$. \end{lem}
\begin{proof}
(1) It is enough to prove (recalling Lemma \ref{lem:stind-eqdef})
that for any $c$, there is some $c'\equiv_{Ab}c$ such that $bc'\ind_{A}a$
and $a\ind_{A}bc'$. Let $p\left(x\right)=\tp\left(c/Ab\right)$.
So it is enough to show that the following set is consistent: 
\begin{eqnarray*}
p\left(x\right) & \cup & \left\{ \neg\varphi\left(x,b,a\right)\left|\,\varphi\in L\left(A\right),\,\varphi\left(x,y,a\right)\mbox{ forks over }A\right.\right\} \\
 & \cup & \left\{ \neg\psi\left(a,b,x\right)\left|\,\psi\in L\left(A\right),\,\psi\left(z,b,c\right)\mbox{ forks over }A\right.\right\} 
\end{eqnarray*}

Suppose not, then $p\left(x\right)\vdash\varphi\left(x,b,a\right)\vee\psi\left(a,b,x\right)$
where $\varphi\left(x,y,a\right)$ forks over $A$, and $\psi\left(z,b,c\right)$
forks over $A$ (recall that forking formulas form an ideal, that
is, forking is preserved under finite disjunctions). Since forking
= dividing we have that $\varphi\left(x,y,a\right)$ and $\psi\left(z,b,c\right)$
actually divide over $A$. 

Now, as $I$ is a witness for $a$, it witnesses that $\varphi\left(x,y,a\right)$
divides over $A$. Clearly, $\left\{ \varphi\left(x,b,a_{i}\right)\left|\, i<\omega\right.\right\} $
is inconsistent. As $I$ is indiscernible over $Ab$, it follows that
$p\left(x\right)\vdash\bigvee_{i<n}\psi\left(a_{i},b,x\right)$ for
some $n<\omega$. Take some $c'\models p\left(x\right)$ such that
$I\ind_{A}bc'$. So for some $i$, the formula $\psi\left(a_{i},b,c'\right)$
holds, so $\psi\left(z,b,c'\right)$ does not divide over $A$, therefore
neither does $\psi\left(z,b,c\right)$ --- a contradiction.

(2) Similar, but now instead of one formula $\psi$ we will have finitely
many formulas of the form $\psi\left(a,b,x_{1}\right)\land\neg\psi\left(a,b,x_{2}\right)$
where $x_{1}$ and $x_{2}$ are sub-tuples of $x$ and both $c\upharpoonright x_{1}$
and $c\upharpoonright x_{2}$ have the same Lascar strong type over
$A$. \end{proof}
\begin{rem}
\label{rem:weakining}Both Observation \ref{obs:direct consequence of Kim}
and Lemma \ref{lem:main lemma} hold even when we weaken the assumption
of indiscernibility to ``having the same type''. So, if $A$ is
an extension base, $I=\left\langle a_{i}\left|\, i<\omega\right.\right\rangle $
is a witness for $a=a_{0}$ over $A$, $I\ind_{A}b$, and all tuples
in $I$ have the same type over $Ab$, then $a\ind_{A}^{\st}b$. The
analog holds for $\ind^{\ist}$. 
\end{rem}

\section{On witnesses and strict Morley sequences.}

We begin with a characterization of indiscernible sequences which
are witnesses over a given extension base $C$.%
\begin{comment}
 The first part of the following Proposition is somewhat technical,
but gives useful quantitative information assuming T (or just $\varphi$)
is dependent. For the main purpose of this paper, however, just the
second part would suffice.
\end{comment}

\begin{prop}
\label{prop:witness alternation rank} ($T$ resilient) Let $A$ be
any set. If $I=\left\langle a_{i}\left|\, i<\omega\right.\right\rangle $
is an indiscernible sequence over $A$ such that $a_{\neq i}\ind_{A}a_{i}$
for all $i<\omega$, and $\varphi\left(x,a_{0}\right)$ divides over
$A$ then $\left\{ \varphi\left(x,a_{i}\right)\left|\, i<\omega\right.\right\} $
is inconsistent.
\end{prop}
\begin{comment}
 Suppose $\varphi\left(x,y\right)\in L\left(A\right)$ has alternation
rank less than $m$, and let $n=\mplus$. If $I=\left\langle a_{i}\left|\, i<\omega\right.\right\rangle $
is a Morley sequence over $A$ such that $a_{<n}\ind_{A}a_{n}$ and
$\varphi\left(x,a_{0}\right)$ divides over $A$ then $\left\{ \varphi\left(x,a_{i}\right)\left|\, i<n\right.\right\} $
is inconsistent.
\end{comment}

\begin{proof}
First we extend $I$ to an $A$-indiscernible sequence $I'=\sequence{a_{i}}{i\in\Zz}$.
By indiscernibility, it is still true that $a_{\neq i}\ind_{A}a_{i}$
for all $i\in\Zz$. If $\varphi\left(x,a_{0}\right)$ divides over
$A$, then it also divides over $Aa_{\neq i}$ by Claim \ref{cla:preservationdividing}.
By definition of resilience, this means that $\varphi\left(x,I\right)$
is inconsistent.\end{proof}
\begin{thm}
\label{thm:characterize_witness}(Characterization of indiscernible
witnesses) Suppose $T$ is resilient. Let $C$ be an extension base,
$I=\sequence{a_{i}}{i\in X}$ an infinite indiscernible sequence over
$C$. Then $I$ is a witness over $C$ if and only if for every $i\in X$
we have $a_{\neq i}\ind_{C}a_{i}$. \end{thm}
\begin{proof}
First, assume $I$ is a witness, but for some $i\in X$ we have $a_{\neq i}\nind_{C}a_{i}$.
We may assume that $X=\left(\Qq,<\right)$. Since forking = dividing
over $C$, there is a formula $\varphi\left(x,y\right)\in L\left(C\right)$
such that $\varphi\left(x,a_{i}\right)$ divides over $C$ and for
some $a=a_{j_{1}}\ldots a_{j_{k}}$ from $I$, $\C\models\varphi\left(a,a_{i}\right)$.
Enumerate $a$ such that $j_{1}<j_{2}<\ldots<j_{\ell}<i<j_{\ell+1}<\ldots<j_{k}$
(it is possible that $j_{\ell}=-\infty$ or $j_{\ell+1}=+\infty$).
Then since $I$ is a witness, by indiscernibility we have that $\left\langle a_{q}\left|\, q\in\left(j_{\ell},j_{\ell+1}\right)\right.\right\rangle $
witnesses the dividing of $\varphi\left(x,a_{i}\right)$ ; however,
by indiscernibility again, $\C\models\varphi\left(a,a_{q}\right)$
for all $q\in\left(j_{\ell,}j_{\ell+1}\right)$, a contradiction.

The converse follows from Proposition \ref{prop:witness alternation rank}. 
\end{proof}
Note that the ``only if'' direction holds in an arbitrary theory,
assuming only that dividing = forking over $C$. So for example it
would hold assuming that $T$ is \NTP{}.
\begin{cor}
\label{cor:strict morley are witnesses in resilient} ($T$ resilient)
Strict non-forking sequences are witnesses over extension bases. \end{cor}
\begin{proof}
Suppose that $I=\left\langle a_{i}\left|\, i<\omega\right.\right\rangle $
is a strict Morley sequence over an extension base $A$. Then for
any $i$, $a_{i}\ind_{C}^{\st}a_{<i}$, so $a_{<i}\ind_{C}a_{i}$.
In addition, $a_{>i}\ind_{C}a_{\leq i}$ since $I$ is Morley (by
transitivity). By transitivity, $a_{\neq i}\ind a_{i}$.\end{proof}
\begin{rem}
\label{rem:strict morley are witnesses} Note that Corollary \ref{cor:strict morley are witnesses in resilient}
is \uline{not} a special case of Example \ref{exa:strict Morley is a witness}. 
\end{rem}
Recalling the second part of Remark \ref{remark strict-morley-sequences-exist}
we also get the following corollary:
\begin{cor}
\label{cor:witnesses exist}(T resilient) Let $A$ is an extension
base, then for every $a$ there is some witness starting with $a$
over $A$. 
\end{cor}
Moreover, we obtain a sort of converse to Lemma \ref{lem:main lemma}:
\begin{prop}
\label{prop:converse to main lemma}($T$ resilient) Suppose that
$A$ is an extension base and $a\ind_{A}^{\st}b$. Then there is a
witness $I=\sequence{a_{i}}{i<\omega}$ over $A$ starting with $a_{0}=a$
which is indiscernible over $Ab$ and such that $I\ind_{A}b$. \end{prop}
\begin{proof}
Let $p\left(x\right)$ be a global strictly non-forking type extending
$\tp\left(a/bA\right)$. Using $p$ generate a long sequence $I'=\sequence{a_{i}'}{i<\kappa}$,
it follows that $I'$ is strictly non-forking over $A$ and by transitivity,
$I'\ind_{A}b$. Now use the Erd\H{o}s-Rado theorem as in Remark \ref{remark strict-morley-sequences-exist}
to find a strict Morley sequence $I$, indiscernible over $Ab$ such
that $I\ind_{A}b$. By Theorem \ref{cor:strict morley are witnesses in resilient}
$I$ is a witness. Together we are done. 
\end{proof}
From Lemma \ref{lem:main lemma} we conclude the following characterization
of strict Morley sequences among Morley sequences:
\begin{cor}
\textbf{\label{cor:strictmorley_char} }($T$ resilient) Let $C$
be an extension base. A Morley sequence $I=\left\langle a_{i}\left|\, i<\omega\right.\right\rangle $
is a strict Morley sequence over $C$ iff it is a witness over $C$. \end{cor}
\begin{proof}
We already know that strict Morley sequences are witnesses. Conversely,
assuming $I$ is Morley, by transitivity, $a_{\geq i}\ind_{C}a_{<i}$.
This means that $\left\langle a_{j}\left|\, j\geq i\right.\right\rangle $
is a witness which is indiscernible over, and independent from $\left\langle a_{j}\left|\, j<i\right.\right\rangle $
over $C$. By Lemma \ref{lem:main lemma} we are done. 
\end{proof}
As a corollary of the proof, we obtain a surprising fact: a two-way
Morley sequence is strict Morley (even totally strict):
\begin{cor}
\label{cor:2way}($T$ resilient) For an extension base $A$ and an
$A$-indiscernible sequence $I=\left\langle a_{i}\left|\, i<\omega\right.\right\rangle $
the following are equivalent:
\begin{enumerate}
\item $I$ is a two-way independent sequence over $A$ (i.e., $a_{\geq i}\ind_{A}a_{<i}$
and $a_{<i}\ind_{A}a_{\geq i}$).
\item $I$ is totally strict.
\end{enumerate}
\end{cor}
Going back to Example \ref{exa:rationals}, it is easy to see that
every $A$-indiscernible increasing sequence of singletons $I=\left\langle a_{i}\left|\, i<\omega\right.\right\rangle $
is Morley (because some increasing Morley sequence over $A$ exists
in the type of $a_{0}$ over $A$: the one generated by the invariant
global type defined by $x>c$ iff $\exists c'\left(c'\geq c\land c'\equiv_{A}a_{0}\right)$
--- and all such sequences have the same type over $A$). Analogously,
this is true for decreasing sequences as well, so $I$ is two-way
independent over $A$.

Here we present an example (due to Pierre Simon) of a strict Morley
sequence which is not totally strict. The condition above is, therefore,
not a characterization of strict Morley sequences.
\begin{example}
\label{exa:pierre}Let $T$ be the theory of dense trees in the language
$<,\wedge$ where $\wedge$ is the meet function. This theory is the
model completion of the theory of trees. It is $\omega$-categorical
and has elimination of quantifiers. 

Note that there are only five types of a non-constant indiscernible
sequence $\sequence{a_{i}}{i<\omega}$ in this theory: increasing,
decreasing ($a_{n}\frac{>}{<}a_{n+1}$), increasing / decreasing comb
($a_{n+2}\wedge a_{n+1}\frac{>}{<}a_{n+1}\wedge a_{n}$) and flower
($a_{n+1}\wedge a_{n}=a_{n+1}\wedge a_{n+2}$). 

Let $M$ be a countable model containing the tree $2^{<\omega}$.
One of the branches, say $\eta$ in $2^{\omega}$ does not have a
point above it in $M$. Let $p\left(x\right)$ be the type $\left\{ x>\eta\upharpoonright i\left|\, i<\omega\right.\right\} $.
Note that this already determines a complete type over $M$. Let $q$
be a global strictly non-forking extension, then $q^{\left(2\right)}$
is not strictly non-forking: 

Assume not, and let $\sequence{a_{i}}{i<\omega}\models q^{\left(\omega\right)}|_{M}$.
Then it is an indiscernible sequence. Note that the formulas $x>a_{0}$
and $x>a_{0}\wedge a_{1}$ divide over $M$. By analyzing the possible
types of $\sequence{a_{i}}{i<\omega}$ it follows that the sequence
of pairs cannot be strictly non-forking. 
\end{example}

\section{Symmetry of strict non-forking}

As a corollary of the previous sections we answer the main question
that motivated this paper: in resilient theories we have equivalence
between strict non-forking and strict independence for a 2-element
set (the original question that we wanted to address was on dependent
theories, but while during the last revision we managed to prove this
for resilient as well). It follows in particular that strict non-forking
is \emph{symmetric}.\emph{ }

First we introduce the following notion, which is (in general) weaker
than strict (or almost strict) independence. 
\begin{defn}
\label{def:wintenss independent}Let $B$ be a set, and $\mathcal{J}$
a set of tuples from $\C$. We say that $\mathcal{\mathcal{J}}$ is
\emph{witness independent }over $B$ if there exists sequences $\sequence{I_{a}}{a\in\mathcal{J}}$
of length $\omega$ such that: 
\begin{itemize}
\item $I_{a}$ starts with $a$.
\item Each $I_{a}$ is a witness for $a$ over $B$.
\item The sequences $I_{a}$ for $a\in\mathcal{J}$ are mutually indiscernible
over $B$. 
\end{itemize}
\end{defn}
\begin{rem}
\label{rem:strict independence implies witness independence}($T$
resilient) If $\mathcal{J}$ is strictly independent over $B$ and
$B$ is an extension base, then it is also witness independent over
$B$ by Corollary \ref{cor:witnesses exist}. When $T$ is \NTP{}
the same holds for an $\ind^{i}$-extension base. \end{rem}
\begin{obs}
\label{obs:not witness ind}Let $A=\set{a_{i}}{i<\lambda}$ be a set,
and assume that there is an index $i$ such that $a_{\neq i}\nind^{d}a_{i}$.
 Then $A$ is not witness independent.\end{obs}
\begin{proof}
Let $J$ be a witness starting with $a_{i}$, then (by the assumptions)
there is a formula $\varphi(x,a_{i})$ that divides witnessed by $J$,
and is satisfied by (some finite sub-tuple of) $a_{\neq i}$. Clearly
in this situation $J$ can not be made indiscernible over $a_{\neq i}$. 
\end{proof}
Let us recall the following definition. 
\begin{defn}
We say that the \emph{chain condition}%
\footnote{The terminology is explained in \cite[Remark 2.3]{cheBenya}. %
} holds for a relation $\ind^{*}$ over a set $A$ if: whenever $a\ind_{A}^{*}b$
and $I$ is an infinite $A$-indiscernible sequence starting with
$b$, then there is an indiscernible sequence $I'\equiv_{Ab}I$ such
that $I'$ is indiscernible over $Aa$ and $a\ind_{A}^{*}I$. When
$\ind^{*}=\ind$ we omit ``for $\ind^{*}$ ''. \end{defn}
\begin{fact}
\label{fac:chain condition}\cite{cheBenya} The chain condition holds
for \NTP{} theories over extension bases. \end{fact}
\begin{lem}
\label{lem:chain condition for ind^st}($T$ resilient) The chain
condition holds over extension bases for $\ind^{\st}$.\end{lem}
\begin{proof}
Suppose $a\ind_{A}^{\st}b$ and that $I$ is an $A$-indiscernible
sequence starting with $b$. We must show that there is some indiscernible
sequence $I'\equiv_{Ab}I$ such that $a\ind_{A}^{\st}I'$. By Proposition
\ref{prop:converse to main lemma}, there is some $Ab$-indiscernible
sequence $J$ which is a witness over $A$, starts with $a$, and
such that $J\ind_{A}b$. By Fact \ref{fac:chain condition}, we may
assume that $J\ind_{A}I$ and that $I$ is indiscernible over $AJ$.
By Ramsey and compactness we can find an infinite sequence $J'=\sequence{a_{i}}{i<\omega}$
which is indiscernible over $AI$ and such that:
\begin{itemize}
\item $J'\ind_{A}I$, $I$ is indiscernible over $AJ'$ and $a_{i}b\equiv_{A}ab$
for all $i<\omega$. 
\end{itemize}
By Lemma \ref{lem:main lemma}, it follows that $a_{0}\ind_{A}^{\st}I$.
Applying an automorphism fixing $Ab$ and taking $a_{i}$ to $a$,
we are done. 
\end{proof}
The following corollary is an independence theorem for resilient theories
and $\ind^{\st}$, whose proof is exactly the same as the proof of
\cite[Theorem 3.3]{cheBenya}, so we omit it. 
\begin{cor}
($T$ resilient) Suppose that $A$ is an extension base, $b,b'$ have
the same Lascar strong type over $A$, $a\ind_{A}bb'$ and $c\ind_{A}^{\st}ab$.
Then there is some $c'$ such that $c'a\equiv_{A}ca$, $c'b'\equiv_{A}cb$
and $c'\ind_{A}^{\st}ab'$. \end{cor}
\begin{thm}
\label{thm:Main}Let $a,b$ be tuples and $A$ a set. Consider the
conditions (1)--(5) below. 

In general, (2) implies (3). 

When $T$ is \NTP{} and $A$ is an extension base, (4) implies (5).
If $A$ is an $\ind^{i}$-extension base then (3) implies (4). 

When $T$ is resilient and $A$ is an extension base, (1), (3), (4),
and (5) are equivalent. 

When $T$ is dependent and $A$ is an extension base, (1)--(5) are
equivalent. 
\begin{enumerate}
\item $a\ind_{A}^{\st}b$
\item $a\ind_{A}^{\ist}b$
\item $\left\{ a,b\right\} $ is a strictly independent set over $A$.
\item $\left\{ a,b\right\} $ is a witness independent set over $A$. 
\item $b\ind_{A}^{\st}a$
\end{enumerate}
\end{thm}
\begin{proof}
By Lemma \ref{lem:(Shelah's-Lemma)},\textbf{ }(2) $\Rightarrow$
(3). 

Now assume that $T$ is \NTP{}. The fact that (3) implies (4) when
$A$ is an $\ind^{i}$-extension base was already observed in Remark
\ref{rem:strict independence implies witness independence}. Now assume
that $A$ is an extension base. Suppose that (4) holds, i.e., that
$\left\{ a,b\right\} $ is a witness independent set over $A$. Let
$I$ be a witness for $a$ over $ $$A$, starting with $a$ and $J$
be a witness for $b$ over $A$, starting with $b$, and assume that
$I$ and $J$ are mutually indiscernible over $A$. By Observation
\ref{obs:direct consequence of Kim}, $J\ind_{A}a$. By Lemma \ref{lem:main lemma},
$b\ind_{A}^{\st}a$. This shows that (4) implies (5). 

Next assume that $T$ is resilient. Remark \ref{rem:strict independence implies witness independence}
shows that (3) implies (4). Assume that $A$ is an extension base
and that $a\ind_{A}^{\st}b$. Suppose we are given two indiscernible
sequences $I$ and $J$, starting with $a$ and $b$ respectively.
By the chain condition for $\ind^{\st}$ (Lemma \ref{lem:chain condition for ind^st}),
we may assume that $a\ind_{A}^{\st}J$ and that $J$ is indiscernible
over $Aa$. Hence $J\ind_{A}a$, and by Fact \ref{fac:dividing},
we may assume that $I$ is indiscernible over $AJ$. But then every
pair $ $$\left(a_{i},b_{j}\right)$ for $i,j<\omega$ has the same
type over $A$. By Claim \ref{cla:making mutually indiscernible}
we can find $I'$ and $J'$ such that $I'\equiv_{Aa}I$, $J'\equiv_{Ab}J$
and $J',I'$ mutually indiscernible. This shows that (1) implies (3).
We already saw that (3) implies (4) which implies (5), so by the symmetric
nature of (3), we get that (1), (3), (4), and (5) are equivalent. 

Next assume that $T$ is dependent and that $A$ is an extension base.
Noting that $\ind^{i}=\ind$ we are done by the previous cases.\end{proof}
\begin{cor}
\emph{\label{cor:Symmetry-of-strict}(Symmetry of strict non-forking)}
Suppose $T$ is resilient. Let $A$ be an extension base. If $b\ind_{A}^{\st}a$
then $a\ind_{A}^{\st}b$.
\end{cor}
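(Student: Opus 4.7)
The plan is to derive this corollary directly from Theorem \ref{thm:Main}, which has just been established. Indeed, the corollary is precisely the implication (4) $\Rightarrow$ (1) of that theorem, so no additional work is required beyond citing the equivalence.

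More conceptually, I would emphasize the following point. Condition (2) of Theorem \ref{thm:Main} says that the \emph{set} $\{a,b\}$ is strictly independent over $A$, and Definition \ref{def:strictind} is manifestly symmetric in the elements of the set: swapping the roles of $a$ and $b$ yields the same statement. Therefore, the fact that both $a\ind_A^{\st} b$ and $b\ind_A^{\st} a$ are equivalent to this symmetric condition forces them to be equivalent to each other. The nontrivial content was already packaged into Theorem \ref{thm:Main}, whose proof used Shelah's Lemma (to get from strict non-forking to strict independence) together with Lemma \ref{lem:main lemma} applied to the strict Morley witnesses produced by Fact \ref{fac:strict_exist} (to go back from $\tfrac{1}{2}$-strict independence to strict non-forking).

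There is no real obstacle here; the only thing to check is that the hypothesis ``$A$ is an extension base'' in Theorem \ref{thm:Main} matches the hypothesis of the corollary, which it does. So the proof is simply: assume $b\ind_A^{\st} a$; by Theorem \ref{thm:Main} applied with the roles of $a$ and $b$ interchanged (equivalently, use (1) $\Leftrightarrow$ (2) with the pair $(b,a)$ and then (2) $\Leftrightarrow$ (1) with the pair $(a,b)$, noting that strict independence of $\{a,b\}$ does not depend on the order), conclude $a\ind_A^{\st} b$.
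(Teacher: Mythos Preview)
Your proposal is correct and matches the paper's approach exactly: the corollary is stated immediately after Theorem~\ref{thm:Main} with no separate proof, precisely because it is the implication $(4)\Rightarrow(1)$ (equivalently, the symmetry of condition~(2)). Your additional remarks about why this works are accurate but not needed for the argument.
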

For \NTP{} we get:
\begin{cor}
($T$ \NTP{}) If $a\ind_{A}^{\ist}b$ for an $\ind^{i}$-extension
base $A$, then $b\ind^{\st}a$.
\end{cor}

\begin{cor}
\label{cor:approximation to main problem}$ $
\begin{enumerate}
\item ($T$ \NTP{}) Suppose $I=\left\langle a_{i}\left|\, i<\omega\right.\right\rangle $
is an indiscernible witness for $a_{0}=a$ over an extension base
$A$. If $I\ind_{A}b$ and $b\ind_{A}^{i}I$, then $a\ind_{A}^{\st}b$.
\item ($T$ dependent) Suppose $C$ is an extension base and $A$ is a set
that has the property that for any finite $a\subseteq A$ there is
some indiscernible witness over $C$ starting with $a$ in $A$.  Then,
for any $B$, if $A\ind_{C}B$ and $B\ind_{C}A$ then $A\ind_{C}^{\st}B$.
\end{enumerate}
\end{cor}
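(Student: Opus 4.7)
Both parts reduce to Lemma \ref{lem:main lemma}, with part (2) additionally needing a routine compactness argument to lift from finite tuples to the whole set $A$. The proof is essentially direct; the only step requiring any care is the compactness gluing in part (2).

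For part (1), the plan is to verify the hypotheses of Lemma \ref{lem:main lemma}. Two of them are given outright: $I$ is a witness for $a$ over $A$, and $I\ind_A b$. The remaining hypothesis, that $I$ is indiscernible over $Ab$, follows immediately from the preservation-of-indiscernibility claim stated just after Fact \ref{fac:strongsplit}, applied to the $A$-indiscernible sequence $I$ together with the assumption $b\ind_A I$. Lemma \ref{lem:main lemma} then yields $a\ind_A^{\st} b$.

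For part (2), I would first establish the statement pointwise: for every finite tuple $a\subseteq A$, $a\ind_C^{\st} B$. Fix such an $a$ and let $I\subseteq A$ be an indiscernible witness for $a$ over $C$, supplied by the hypothesis on $A$. Monotonicity of $\ind$ on either side turns $A\ind_C B$ into $I\ind_C B$ and $B\ind_C A$ into $B\ind_C I$, so part (1) applies (with $C$ in place of $A$ and $B$ in place of $b$) and gives $a\ind_C^{\st} B$.

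The last task is to amalgamate the pointwise conclusions into the single statement $A\ind_C^{\st} B$. I would use clause (4) of Lemma \ref{lem:stind-eqdef}: fix a $(|CB|+|T|)^+$-saturated model $M\supseteq CB$ and, for each finite $a\subseteq A$, pick via clause (3) some $M_a\equiv_B M$ with $a\ind_C M_a$ and $M_a\ind_C^d a$. The partial type over $BA$ expressing ``$x\equiv_B M$, and for every finite $a\subseteq A$ both $a\ind_C x$ and $x\ind_C^d a$'' is finitely satisfied by the various $M_a$'s (each finite subtype involves only finitely many elements of $A$), hence consistent by compactness. A realization $M'$ then satisfies $A\ind_C M'$ and $M'\ind_C^d A$ by finite character of non-forking and non-dividing, and clause (4) of Lemma \ref{lem:stind-eqdef}, read with the possibly infinite tuple $A$ in place of the tuple $a$ there, concludes $A\ind_C^{\st} B$. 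The only mildly delicate point is packaging the non-forking and non-dividing requirements as a single consistent partial type, but this is standard finite character and presents no real obstacle.
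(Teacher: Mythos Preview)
Your proof of part (1) is exactly the paper's: apply preservation of indiscernibility to get $I$ indiscernible over $Ab$, then invoke Lemma \ref{lem:main lemma}. For part (2) the paper simply writes ``(2) follows from (1)''; your expansion via clause (3)/(4) of Lemma \ref{lem:stind-eqdef} and compactness is the natural way to spell this out, and it is correct (one small slip: when invoking clause (3) you should have $M_a\equiv_{CB}M$ rather than $M_a\equiv_B M$, since the ``$B$'' of Lemma \ref{lem:stind-eqdef} corresponds to $CB$ here). So your proposal is correct and follows the same route as the paper, just with more detail in part (2).
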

\begin{proof}
(1) follows from Lemma \ref{lem:main lemma} and by preservation of
indiscernibility. By the finite character of $\ind^{\ist}$ (see Lemma
\ref{lem:stind-eqdef}), (2) follows from (1) as $\ind^{i}=\ind$. \end{proof}
\begin{claim}
\label{cla:kind of local character}($T$ \NTP{}) Suppose $I$ is
a sequence of length $\left(\left|T\right|+\left|A\right|\right)^{+}$
which is a witness over an extension base $A$. Then%
\footnote{Here and in the following corollary, for simplicity of the presentation
we assume that $a$ and $b$ are finite tuples, but one can easily
modify the assumptions to suit infinite tuples.%
} for any $a$ there is some $b\in I$ such that $a\ind_{A}b$.\end{claim}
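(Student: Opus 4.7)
The plan is to argue by contradiction. Suppose $a \nind_{A} b$ for every $b \in I$. Since $A$ is an extension base, forking equals dividing over $A$, so for each $b \in I$ we may pick a formula $\varphi_{b}(x,y) \in L(A)$ with $\models \varphi_{b}(a,b)$ and $\varphi_{b}(x,b)$ dividing over $A$.

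Next, I invoke pigeonhole. The number of $L(A)$-formulas is at most $|T| + |A|$, whereas $|I| = (|T|+|A|)^{+}$. Hence there exist a single formula $\varphi(x,y) \in L(A)$ and a subset $J \subseteq I$ of size $(|T|+|A|)^{+}$ such that $\varphi_{b} = \varphi$ for every $b \in J$. Fix any countable $J_{0} \subseteq J$. Then $a$ realizes $\varphi(x,b)$ for every $b \in J_{0}$, so $\varphi(x,J_{0})$ is consistent.

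On the other hand, by clause (1) of Definition \ref{def:witness}, all elements of $I$ share the same type over $A$, so dividing of $\varphi(x,y)$ over $A$ depends only on $\tp(y/A)$. Since $\varphi(x,b)$ divides over $A$ for any (indeed every) $b \in J$, the witness property of $I$ forces $\varphi(x,J_{0})$ to be inconsistent for this countable $J_{0} \subseteq I$, a contradiction.

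The argument is really just pigeonhole on formulas combined with the witness condition and ``forking $=$ dividing'' over an extension base; I do not anticipate any genuine obstacle. The bound $(|T|+|A|)^{+}$ in the hypothesis is precisely what is needed to catch countably many $b$'s on which the same formula $\varphi$ works, which is the minimum required to invoke the witness property.
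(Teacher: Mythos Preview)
Your proof is correct and follows essentially the same approach as the paper's: assume failure, use forking $=$ dividing to extract a dividing formula for each $b\in I$, apply pigeonhole on the $\leq |T|+|A|$ many $L(A)$-formulas to find an infinite subset on which the formula is constant, and contradict the witness property. The paper's version is terser but the argument is identical.
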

\begin{proof}
If not, then for every $b\in I$, there is some formula $\varphi_{b}\left(x,y\right)$
over $A$ and such that $\varphi_{b}\left(a,b\right)$ holds and $\varphi_{b}\left(x,b\right)$
divides over $A$. We can find an infinite subset $I_{0}$ of $I$
such that $\varphi_{b}$ is constant for all $b\in I_{0}$. Thus we
have a contradiction to the fact that $I$ is a witness. 
\end{proof}
The next corollary gives us another (without use of the chain condition
for $\ind^{\st}$) path to symmetry:
\begin{cor}
\label{cor:witness on the left}($T$ resilient) Assume that $I$
is a sequence of length $\left(\left|T\right|+\left|A\right|\right)^{+}$
which is a witness over an extension base $A$ and that $I\ind_{A}a$.
Then for some $b\in I$, $a\ind_{A}^{\st}b$. 

It follows that if $b\ind_{A}^{\st}a$, then $a\ind_{A}^{\st}b$. \end{cor}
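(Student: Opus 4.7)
The plan is to apply Corollary \ref{cor:approximation to main problem}(1) to a tailor-made indiscernible witness $J^*$ for $a$ and a well-chosen $b \in I$. The hypothesis $I\ind_A a$ will be used to secure one of the two non-forking conditions that Corollary \ref{cor:approximation to main problem}(1) demands, namely $b\ind_A J^*$, and Claim \ref{cla:kind of local character}, applied with $J^*$ as an infinite tuple, will supply the other direction $J^* \ind_A b$.

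First I would use Fact \ref{fac:strict_exist} and the construction paragraph after it: since $A$ is an extension base, there is a strict Morley sequence $J = \langle a_j : j < \omega\rangle$ over $A$ with $a_0 = a$, and by Example \ref{exa:strict Morley is a witness} this $J$ is an indiscernible witness for $a$ over $A$. Next, applying the standard (right) extension axiom for non-forking over an extension base to $I\ind_A a$, I would produce an $Aa$-conjugate $J^* \equiv_{Aa} J$ satisfying $I \ind_A a J^*$. Since $J^* \equiv_{Aa} J$, the sequence $J^*$ still starts with $a$ and, being $A$-conjugate to $J$, remains a strict Morley sequence over $A$ and hence an indiscernible witness for $a$ over $A$. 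Monotonicity of $I \ind_A J^*$ on the left then delivers $b \ind_A J^*$ for every $b \in I$.

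For the other direction, I would apply Claim \ref{cla:kind of local character} with the countable tuple $J^*$ playing the role of ``$a$''; the footnote to the claim flags this generalization as routine, and indeed the pigeonhole in its proof still works with $|I| = (|T|+|A|)^+$ because $|L(A) \times [\omega]^{<\omega}| = |T|+|A|$. A ``bad'' pair $(\chi, S) \in L(A) \times [\omega]^{<\omega}$ constant on a countably infinite $I_0 \subseteq I$ would be realized by the fixed finite subtuple $(a_j)_{j \in S}$ of $J^*$, contradicting the witness property of $I$ exactly as in the original proof of the claim. The upshot is some $b \in I$ with $J^* \ind_A b$.

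All three hypotheses of Corollary \ref{cor:approximation to main problem}(1) are now in place for $(J^*, a, b)$: $J^*$ is an indiscernible witness for $a$ over the extension base $A$; $J^* \ind_A b$; and $b \ind_A J^*$. The corollary yields $a \ind_A^\st b$, as required. The only technical points that warrant a moment's care are the (right) extension step for non-forking, which is the usual extension axiom over extension bases applied in the ``move $J$, not $I$'' form, and the generalization of Claim \ref{cla:kind of local character} to countable tuples, which is the footnote's routine matter.
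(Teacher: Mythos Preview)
Your argument is correct and follows essentially the same route as the paper's proof: build a (countable) strict Morley sequence $J$ starting with $a$ with $I\ind_A J$, use Claim~\ref{cla:kind of local character} (with the infinite tuple $J$) to find $b\in I$ with $J\ind_A b$, note $b\ind_A J$ by monotonicity, and conclude via Corollary~\ref{cor:approximation to main problem}(1). You simply spell out more carefully the extension step and the pigeonhole for countable tuples that the paper leaves implicit.
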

\begin{proof}
Let $J$ be a countable strict Morley sequence over $A$ starting
with $a$ (see Corollary \ref{cor:witnesses exist}). Since $I\ind_{A}a$,
we may assume that $J$ is indiscernible over $AI$. By Claim \ref{cla:kind of local character}
there is some $b\in I$ such that $J\ind_{A}b$. But in addition,
$J$ is indiscernible over $Ab$. By Lemma \ref{lem:main lemma} we
are done. 

Symmetry now follows: by Proposition \ref{prop:converse to main lemma},
we can find such a sequence $I$ which is indiscernible over $Aa$. \end{proof}
\begin{rem}
($T$ dependent) If in Corollary \ref{cor:witness on the left} we
assume that $I$ is an \uline{indiscernible} witness over $A$,
then it is enough to assume it has length $\left|T\right|^{+}$, because
then by ``shrinking of indiscernibles'' (see \cite{Sh715}), some
end-segment of $I$ is indiscernible over $aA$ and we can use Lemma
\ref{lem:main lemma} and symmetry. 
\end{rem}
We cannot expect full transitivity of $\ind^{\st}$ in general, due
to the following proposition, for which we recall:
\begin{defn}
\label{def:gen.stable}A type $p\in S(A)$ in a dependent theory $T$
is called \emph{generically stable }if some (equivalently, any) Morley
sequence in $p$ is an indiscernible set.\end{defn}
\begin{prop}
\label{prop:transitivity implies gen.stable}($T$ dependent) Suppose
$p$ is a type over an extension base $A$ such that strict non-forking
is transitive on realization of $p$, namely if $B\ind_{A}^{\st}CD$
and $C\ind_{A}^{\st}D$ then $BC\ind_{A}^{\st}D$ where $B$, $C$
and $D$ are sets of realizations of $p$. Then $p$ is generically
stable.\end{prop}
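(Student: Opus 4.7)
The plan is to prove the proposition by showing that any strict Morley sequence $I = \langle a_i : i < \omega \rangle$ in $p$ over $A$ (which exists by Fact \ref{fac:strict_exist}) is an indiscernible set. This will establish that $p$ is generically stable by Definition \ref{def:gen.stable}. The equality $\ind = \ind^{\st}$ on realizations of $p$ will then follow from the standard NIP fact that a generically stable type has a unique global non-forking extension which is both an heir and a coheir over any model $M \supseteq A$, and therefore strictly non-forking by Fact \ref{fac:strict_exist}.

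The first main step is to strengthen the strict Morley property of $I$ using transitivity. By induction on $n$, I would show that $a_n a_{n-1} \cdots a_1 \ind_{A}^{\st} a_0$: applying the transitivity hypothesis with $B = a_n$, $C = a_{n-1} \cdots a_1$, $D = a_0$, using the strict Morley property $a_n \ind_{A}^{\st} a_{<n}$ (which supplies $B \ind_{A}^{\st} CD$ since $CD$ is the same set) and the inductive hypothesis $a_{n-1} \cdots a_1 \ind_{A}^{\st} a_0$. By compactness this yields $a_{>0} \ind_{A}^{\st} a_0$, and the same argument applied to shifted sub-sequences gives $a_{\geq j} \ind_{A}^{\st} a_{<j}$ for each $j$. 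A parallel iterated argument starting from $a_j \ind_{A}^{\st} a_{>j}$ (obtained by applying the reversed form of the previous step) produces $a_{\leq j} \ind_{A}^{\st} a_{>j}$. Combining these via a final application of transitivity with $B = a_{<j}$, $C = a_{>j}$, $D = a_j$, where the hypothesis $a_{<j} \ind_{A}^{\st} a_{\geq j}$ comes from Corollary \ref{cor:Symmetry-of-strict} applied to $a_{\geq j} \ind_{A}^{\st} a_{<j}$, yields $a_{\neq j} \ind_{A}^{\st} a_j$, and hence $a_j \ind_{A}^{\st} a_{\neq j}$ by symmetry.

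The main obstacle will be the final step: concluding that $I$ is an indiscernible set from the derived relation $a_j \ind_{A}^{\st} a_{\neq j}$ for all $j$. By Shelah's Lemma (Lemma \ref{lem:(Shelah's-Lemma)}), the set $\{a_i : i < \omega\}$ is strictly independent over $A$. Taking each $I_j$ in Definition \ref{def:strictind} to be the $A$-indiscernible shifted sequence $\langle a_j, a_{j+1}, \ldots\rangle$ starting at $a_j$ (each having the same EM-type as $I$), one obtains primed versions $I_j'$ which are mutually indiscernible over $A$, with $I_j' \equiv_{A a_j} I_j$. A homogeneity argument using the mutual indiscernibility together with the indiscernibility of the original $I$ shows that any permutation of the $a_i$ has the same type over $A$ as $I$, so $I$ is an indiscernible set. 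This gives generic stability, and then $\ind = \ind^{\st}$ on realizations of $p$ follows as explained in the first paragraph.
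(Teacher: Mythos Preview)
Your derivation of $a_{\neq j}\ind_A^{\st}a_j$ from transitivity and symmetry is correct, but the final ``homogeneity argument'' does not go through. When you feed the shifted tails $I_j=\langle a_j,a_{j+1},\dots\rangle$ into Definition~\ref{def:strictind}, the mutually indiscernible sequences $I_j'$ you obtain agree with $I_j$ only over $Aa_j$; every later entry of $I_j'$ is a \emph{new} element, not one of the original $a_i$'s. Consequently, mutual indiscernibility of the $I_j'$ carries no direct information about permutations of the original tuple $\langle a_i\rangle$. For instance, writing $I_0'=\langle a_0,b_0^1,\dots\rangle$, you can read off $a_0a_1\equiv_A b_0^1a_1$ (from $I_0'$ indiscernible over $Aa_1$) and $a_0b_0^1\equiv_A a_0a_1$ (from $I_0'\equiv_A I$), but there is no route from these to $a_0a_1\equiv_A a_1a_0$: nothing in your setup forces $b_0^1\equiv_{Aa_1}a_0$, which is what the swap would need.

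The paper supplies the missing ingredient by passing to a model $M\supseteq A$ and working with a sequence $\langle a_i:i<n{+}1\rangle$ generated over $M$ by a fixed global strictly non-forking extension $q$ of $p$. After the same transitivity/symmetry manipulation you perform, one obtains that $a_{n-1}$ is non-forking over $M$ from $a_{<n-2}a_{n-2}a_n$; the decisive point is that non-forking over a model implies \emph{invariance} over it (Fact~\ref{fac:strongsplit}). Since $a_{<n-2}a_{n-2}$ and $a_{<n-2}a_n$ both realize $q^{(n-1)}|_M$, invariance lets one replace one by the other without changing $\tp(a_{n-1}/M\,\cdot\,)$, which handles the adjacent transposition and hence all permutations. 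This invariance-over-a-model step is precisely what your argument over the bare extension base $A$ lacks, and strict independence alone is not strong enough to substitute for it. (Your outline for $\ind=\ind^{\st}$, once generic stability is established, is fine in spirit; the paper argues this via symmetry of non-forking for generically stable types together with Theorem~\ref{thm:Main}.)
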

\begin{proof}
By Fact \ref{fac:strict_exist}, there is some global strictly non-forking
(over $A$) type $q$ which extends $p$. Let $M\supseteq A$ be some
model. To show that $p$ is generically stable, it is enough to show
that for every $n$ and every permutation $\sigma:n\to n$, if $a=\left\langle a_{i}\left|\, i<n\right.\right\rangle \models q^{\left(n\right)}|_{M}$
then $a_{\sigma}=\left\langle a_{\sigma\left(i\right)}\left|\, i<n\right.\right\rangle \models q^{\left(n\right)}|_{M}$.
It is enough to show it for $\sigma=\left(i\;\, i+1\right)$ where
$i+1<n$. By induction, we may assume $i=n-2$. 

Let $\left\langle a_{i}\left|\, i<n+1\right.\right\rangle \models q^{\left(n+1\right)}|_{M}$.
Since $a_{n}\ind_{A}^{\st}a_{<n-2}a_{n-2}a_{n-1}M$ , by transitivity
and symmetry, we have $a_{<n-2}a_{n}a_{n-2}M\ind_{A}^{\st}a_{n-1}$.
By symmetry again, we have $a_{n-1}\ind_{A}^{\st}a_{<n-2}a_{n}a_{n-2}M$.
By Fact \ref{fac:strongsplit}, since $a_{n}a_{<n-2}\equiv_{M}a_{n-2}a_{<n-2}$
(they both realize $q^{\left(n-1\right)}|_{M}$), $a_{n}a_{n-1}a_{<n-2}\equiv_{M}a_{n-2}a_{n-1}a_{<n-2}$.
Since the left hand side realizes $q^{\left(n\right)}|_{M}$, we are
done. \end{proof}
\begin{cor}
(poor man's transitivity) Assume $T$ is resilient. Suppose that both
$A$ and $Ab$ are extension bases. If $b\ind_{A}a_{1}$, $b\ind_{A}a_{2}$
and $a_{1}\ind_{Ab}^{\st}a_{2}$ then $a_{1}\ind_{A}^{\st}a_{2}$. 

If $T$ is dependent then we can drop the assumption that $Ab$ is
an extension base. \end{cor}
\begin{proof}
First assume that $T$ is resilient and that both $A$ and $Ab$ are
extension bases. By Theorem \ref{thm:Main}, $\left\{ a_{1},a_{2}\right\} $
is a witness independent set over $Ab$ which means that there are
witnesses $I_{1}=\sequence{a_{i}^{1}}{i<\omega}$ and $I_{2}=\sequence{a_{i}^{2}}{i<\omega}$
such that both are mutually indiscernible over $Ab$ and $a_{1}=a_{0}^{1}$,
$a_{2}=a_{0}^{2}$. We will be done by showing that $I_{1},I_{2}$
are also witnesses over $A$ (which will show that $\left\{ a_{1},a_{2}\right\} $
is a witness independent set over $A$). Indeed, if $\varphi\left(x,a_{1}\right)$
divides over $A$, then since $b\ind_{A}a_{1}$, it follows from Fact
\ref{fac:dividing} that $\varphi\left(x,a_{1}\right)$ divides over
$Ab$, so $I_{1}$ witnesses it. Similarly this is true for $I_{2}$. 

In the NIP case we do not need to assume that $Ab$ is an extension
base. Again, by Theorem \ref{thm:Main}, it is enough to show that
$\left\{ a_{1},a_{2}\right\} $ is a strictly independent set over
$A$. Let $I_{1},I_{2}$ be indiscernible sequences over $A$ starting
with $a_{1},a_{2}$. As $b\ind_{A}a_{1}$, there is some $I_{1}'\equiv_{A}I_{1}$
starting with $a_{1}$ that is indiscernible over $Ab$, and there
is some $I_{2}'\equiv_{A}I_{2}$ starting with $a_{2}$ and indiscernible
over $Ab$. Since $a_{1}\ind_{Ab}^{\st}a_{2}$ we may assume that
$I_{1}',I_{2}'$ are mutually indiscernible over $Ab$ (this does
not require $Ab$ to be an extension base by Lemma \ref{lem:(Shelah's-Lemma)})
and in particular over $A$. 

\medskip{}

\end{proof}
We conclude with a few remarks on simple theories. 
\begin{thm}
\label{thm:simple}Strict non-forking equals forking iff $T$ is simple.\end{thm}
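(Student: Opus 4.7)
The plan is to prove the two directions separately.

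For the forward direction (simplicity implies $\ind^{\st}=\ind$), I observe that $\ind^{\st}$ always implies $\ind$, so only the converse requires proof. In a simple $T$, every set is an extension base, so given $a\ind_A b$ I can take a global non-forking extension $p$ of $\tp(a/Ab)$. To verify the definition of $\ind^{\st}$, I fix an arbitrary $B\supseteq Ab$ and $c\models p|_B$: since $p$ does not fork over $A$, I get $c\ind_A B$; symmetry of non-forking in simple theories yields $B\ind_A c$; and since forking equals dividing in simple theories, $B\ind_A^d c$. This establishes $a\ind^{\st}_A b$.

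For the backward direction ($\ind^{\st}=\ind$ implies simplicity), my key observation is that every Morley sequence automatically becomes strict Morley. Specifically, if $I=\langle a_i\rangle$ is Morley over $A$, then $a_i\ind_A a_{<i}$ by definition, so by hypothesis $a_i\ind^{\st}_A a_{<i}$, making $I$ strict Morley. Applying Shelah's Lemma~\ref{lem:(Shelah's-Lemma)} followed by Kim's Lemma~\ref{lem:(Kim's-Lemma)} then gives that $I$ is a witness for dividing. Thus every Morley sequence is a witness, and Kim's classical characterization of simple theories (any theory in which all Morley sequences over models are witnesses for dividing is simple) finishes the proof.

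The main obstacle is the appeal to Kim's classical theorem in the final step. A more hands-on alternative would extract symmetry of non-forking directly from the hypothesis: given $a\ind_A b$, applying the definition of $a\ind^{\st}_A b$ with $B=Ab$ and $c$ realizing $p|_{Ab}=\tp(a/Ab)$ (so $c\equiv_{Ab}a$), one obtains $b\ind_A^d a$. Combined with the general fact that forking equals dividing over models, this yields symmetry of non-forking over models, which by Kim characterizes simplicity.
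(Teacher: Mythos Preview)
Your forward direction is correct and matches the paper's: both use symmetry of non-forking in simple theories to verify the definition of $\ind^{\st}$ (the paper does this via Lemma~\ref{lem:stind-eqdef}).

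For the backward direction, your alternative contains the paper's key observation, but your completion has a gap. You correctly derive that $\ind=\ind^{\st}$ implies $a\ind_A b\Rightarrow b\ind^d_A a$ for all $a,b,A$; this is exactly what the paper extracts. However, your next step --- ``the general fact that forking equals dividing over models'' --- is not a general fact: it is the theorem of \cite{kachernikov} for $\NTPT$ theories, and since you are trying to \emph{prove} simplicity you may not assume $\NTPT$. The paper avoids this detour and simply cites the proof of \cite[Proposition~2.3.8]{WagnerBook}, which shows directly that in any non-simple theory one can find $a,b,A$ with $a\ind_A b$ but $b\nind^d_A a$. So your observation alone, combined with Wagner, already finishes the argument; the problematic step is unnecessary.

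Your first approach has the same underlying issue. Lemma~\ref{lem:(Kim's-Lemma)} is stated under the paper's standing dependence hypothesis and (as noted in Section~\ref{sec:My-weight-problem}) extends to $\NTPT$, but it fails outside $\NTPT$: the construction in the proof of the pre-weight characterization of $\NTPT$ in Section~\ref{sec:My-weight-problem} produces, in any theory with $\mathrm{TP}_2$, a strictly independent sequence over a model violating its conclusion. Hence you cannot invoke it to conclude that your strict Morley sequences are witnesses before knowing that $T$ is $\NTPT$.
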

\begin{proof}
Lemma \ref{lem:stind-eqdef} holds in any theory. If $T$ is simple,
then non-forking is symmetric, and hence by that lemma, equals strict
non-forking. On the other hand, if $T$ is not simple, then the proof
of \cite[Proposition 2.3.8]{WagnerBook} gives some $a,b,A$ such
that $a\ind_{A}b$ and $b\nind_{A}^{d}a$, so strict non-forking does
not equal non-forking. 
\end{proof}
Note that by Theorem \ref{thm:simple}, Corollary \ref{cor:strict morley are witnesses in resilient}
is a generalization of Kim's lemma for simple theories. Also note
that in simple theories every set is an extension base.
\begin{cor}
\label{cor:simplesequences}Let $T$ be simple, $I$ an infinite indiscernible
sequence. Then the following are equivalent:
\begin{enumerate}
\item $I$ is a Morley sequence. 
\item $I$ is a witness. 
\item $I$ is a strict Morley sequence. 
\item $I$ is totally strict. 
\end{enumerate}
\end{cor}
\begin{proof}
By Corollaries \ref{cor:strictmorley_char} and \ref{cor:2way}, Theorem
\ref{thm:characterize_witness} and symmetry of non-forking. \end{proof}
\begin{prop}
\label{prop:simple-strictind}Let $T$ be simple, let $A=\set{a_{i}}{i<n}$
be a (forking) independent set over a set $A$, and let $I_{i}$ be
an indiscernible sequence starting with $a_{i}$. Then there exist
sequences $I'_{i}$ such that
\begin{itemize}
\item $I'_{i}\equiv_{Aa_{i}}I_{i}$.
\item The set $\set{I'_{i}}{i<n}$ is independent over $A$.
\item $I'_{i}$ is indiscernible over $I'_{<i}a_{>i}A$.
\end{itemize}
\end{prop}
\begin{proof}
For simplicity assume $A=\emptyset$. We prove by induction on $k\le n$
that there are sequences $I'_{i}$ for $i<k$ such that: 
\begin{enumerate}
\item $I'_{i}\equiv_{a_{i}}I_{i}$.
\item The set $\set{I'_{i}}{i<k}\cup\set{a_{i}}{k\leq i<n}$ is independent.
\item $I'_{i}$ is indiscernible over $I'_{<i}a_{>i}$.
\end{enumerate}
So assume that $I'_{<k}$ satisfying the requirements (1)--(3) already
exist. By (2), $a_{k}\ind I'_{<k}a_{>k}$, hence $I'_{<k}a_{>k}\ind a_{k}$.
By the chain condition, there is $I'_{k}\equiv_{a_{k}}I_{k}$ such
that $I'_{<k}a_{>k}\ind I'_{k}$ and $I'_{k}$ is indiscernible over
$I'_{<k}a_{>k}$. This completes the proof. \end{proof}
\begin{thm}
\label{thm:simple-allequivalent} Let $T$ be simple, $B$ a set of
parameters, and let $A=\set{a_{i}}{i<\lambda}$ be a set. Then the
following are equivalent:
\begin{enumerate}
\item $A$ is $B$-independent.
\item $A$ is strictly independent over $B$. 
\item $A$ is witness independent over $B$. 
\item The sequence $\sequence{a_{i}}{i<\lambda}$ is strictly non-forking
over $B$. 
\end{enumerate}
\end{thm}
\begin{proof}
For simplicity assume $B=\emptyset$. 

The equivalence of (1) and (4) follows from Theorem \ref{thm:simple}. 

(1) implies (2) by Proposition \ref{prop:simple-strictind} (as in
the proof of Lemma \ref{lem:(Shelah's-Lemma)}). %
\begin{comment}
If $A$ is independent, one can easily construct mutually indiscernible
Morley sequences $I_{i}$ starting with $a_{i}$. This proves (1)
implies (3).
\end{comment}

(2) implies (3) by Remark \ref{rem:strict independence implies witness independence}
(since every set is an extension base).

(3) implies (1): if $A$ is dependent, by symmetry there is an index
$i$ such that $a_{\neq i}\nind a_{i}$. Since dividing equal forking,
$A$ is not witness independent by Observation \ref{obs:not witness ind}.

\begin{comment}
We have seen that (1), (3), and (4) are equivalent, and that (2) implies
them. It left to show that something implies (2). If $|A|=2$, we
know that (4) implies (2) by Theorem \ref{thm:Main} (recall that
every set is an extension base). Assume $|A|=n+1$, say, $A=\set{a_{i}}{i<n+1}$,
and assume $A$ is independent. Let $I_{i}$ be indiscernible starting
with $a_{i}$. By the induction hypothesis we may assume that $I_{i}$
are mutually indiscernible for $i<n$. Now without loss of generaliry
$I_{n}\ind I_{\{<n\}}$.
\end{comment}

\end{proof}

\section{Quantitative results}

\begin{comment}
The first part of the following Proposition is somewhat technical,
but gives useful quantitative information assuming T (or just $\varphi$)
is dependent. For the main purpose of this paper, however, just the
second part would suffice.
\end{comment}

This section contains quantitative analogues of several results from
the previous sections, assuming that the theory (or just a certain
formula) is dependent. The proofs are somewhat technical, and these
results are not really used in the rest of the paper, so this section
can be omitted in the first reading. 
\begin{defn}
\label{def:bounded alt-rank}A theory has \emph{bounded alternation
rank} if for every $n<\omega$ there is some $m<\omega$ such that
whenever $\varphi\left(x,y\right)$ is a formula with $\lg\left(x\right)=n$,
the alternation rank of $\varphi\left(x,y\right)$ is less than $m$,
i.e., there is no indiscernible sequence $\left\langle b_{i}\left|\, i<\omega\right.\right\rangle $
such that $\left\{ \varphi\left(x,b_{i}\right)^{i\pmod2}\left|\, i<m+1\right.\right\} $
is consistent. 
\end{defn}
Note that if $T$ has bounded alternation rank, then it is dependent.

The first result may be considered as a quantitative version of Proposition
\ref{prop:witness alternation rank}.
\begin{prop}
\label{prop:witness alternation rank-1} Let $A$ be any set. Suppose
$\varphi\left(x,y\right)\in L\left(A\right)$ has alternation rank
less than $m$, and let $n=\mplus$. If $I=\left\langle a_{i}\left|\, i<\omega\right.\right\rangle $
is a Morley sequence over $A$ such that $a_{<n}\ind_{A}a_{n}$ and
$\varphi\left(x,a_{0}\right)$ divides over $A$ then $\left\{ \varphi\left(x,a_{i}\right)\left|\, i<n\right.\right\} $
is inconsistent.\end{prop}
\begin{proof}
Suppose not. It is enough to show that 
\[
\left\{ \neg\varphi\left(x,a_{i}\right)\left|\, i\in E\right.\right\} \cup\left\{ \varphi\left(x,a_{i}\right)\left|\, i\in O\right.\right\} 
\]
 is consistent where $O$ ($E$) is the set of odd (even) numbers
smaller than $m+1$. 

If not, then, letting $\Sigma_{1}=\left\{ \varphi\left(x,a_{i}\right)\left|\, i\in E\right.\right\} $
and $\Sigma_{2}=\left\{ \varphi\left(x,a_{i}\right)\left|\, i\in O\right.\right\} $,
we get that $\Sigma_{2}\models\bigvee\Sigma_{1}$. Let $\Sigma_{0}\subseteq\Sigma_{1}$
be minimal such that $\Sigma_{2}\models\bigvee\Sigma_{0}$. Assume
that $\Sigma_{0}$ is not empty, and let $i_{0}\in E$ be minimal
such that $\varphi\left(x,a_{i_{0}}\right)\in\Sigma_{0}$. Let $J=O\cup E_{>i_{0}}$.
By assumption, $\left\{ a_{i}\left|\, i\in O_{<i_{0}}\right.\right\} \ind_{A}a_{i_{0}}$,
and since $I$ is a Morley sequence, by transitivity we have $\left\{ a_{i}\left|\, i\in J\right.\right\} \ind_{A}a_{i_{0}}$. 

Since $\varphi\left(x,a_{i_{0}}\right)$ divides over $A$, it also
divides over $A\cup\left\{ a_{i}\left|\, i\in J\right.\right\} $
(Claim \ref{cla:preservationdividing}), so there is an indiscernible
sequence $\left\langle b_{j}\left|\, j<\omega\right.\right\rangle $
with $b_{0}=a_{i_{0}}$ that witnesses this. By indiscernibility,
for all $j<\omega$ we have 

\[
\Sigma_{2}\vdash\varphi(x,b_{j})\lor\bigvee\Sigma_{0}\backslash\left\{ \varphi\left(x,a_{i_{0}}\right)\right\} .
\]
But then $\Sigma_{2}\vdash\bigvee\left(\Sigma_{0}\backslash\left\{ \varphi\left(x,a_{i_{0}}\right)\right\} \right)$,
contradicting the minimality of $\Sigma_{0}$. 

So $\Sigma_{0}$ is empty, i.e., $\Sigma_{2}\vdash\bot$, contradicting
our assumption. \end{proof}
\begin{rem}
One can extract another argument for Proposition \ref{prop:witness alternation rank-1}
from \cite[proof of Claim 5.19]{Sh783}, assuming that $T$ is extensible:

Suppose $c\vDash\left\{ \varphi\left(x,a_{i}\right)\left|\, i<n\right.\right\} $,
and let $c'$ be such that $c'\equiv_{Aa_{O}}c$ and $c'\ind_{Aa_{O}}I$,
where $a_{O}=\left\{ a_{i}\left|\, i\in O\right.\right\} $. In particular
$\varphi\left(c',a_{i}\right)$ holds for $i\in O$. If $i\in E$,
then by transitivity and the assumption, $c'a_{O}\ind_{A}a_{i}$,
so $\neg\varphi\left(c',a_{i}\right)$ holds. \end{rem}
\begin{cor}
\label{cor:dp-minimal witness}Suppose $T$ has bounded alternation
rank, and $A$ is an extension base. Suppose that $\varphi\left(x,y\right)\in L\left(A\right)$,
$x$ is a tuple of length $n$, and let $m$ be as in Definition \ref{def:bounded alt-rank}.
Suppose $\left\langle a_{i}\left|\, i<\omega\right.\right\rangle $
is a Morley sequence such that $a_{<k}\ind_{A}a_{k}$ where $k=\mplus$.
Then, if $\varphi\left(x,a_{0}\right)$ forks (divides) over $A$,
then $\left\{ \varphi\left(x,a_{i}\right)\left|\, i<k\right.\right\} $
is inconsistent. 

If $T$ is dp-minimal (see e.g., \cite{AlfAlex-dp,Simon-Dp-min}),
then by \cite{AlexAlfTemp}, it has bounded alternation rank. In fact,
$m=2n+1$, so $k=n+1$. \end{cor}
\begin{rem}
In a general dependent theory, it is known by \cite{HP} that if $a,b\models p\in S\left(\C\right)$
and $p$ does not fork over $A$ then $a$ and $b$ have the same
Lascar strong type iff there is an indiscernible sequence $\bar{c}$
over $A$ such that $a\bar{c}$ and $b\bar{c}$ are both indiscernible
over $A$. In fact, if $a\ind_{A}b$ and $a$ and $b$ have the same
Lascar strong type over $A$, then $\left\langle b,a\right\rangle $
starts a Morley sequence over $A$. Indeed, let $M$ be some model
containing $A$ such that $a\ind_{A}bM$, and let $p\in S\left(\C\right)$
be a global non-forking extension of $\tp\left(a/Mb\right)$. So $p$
is invariant over $M$, and both $\bar{a}=\left\langle a_{i}\left|\, i<\omega\right.\right\rangle \models p^{\left(\omega\right)}|_{Mab}$
and $a\frown\bar{a}$ are indiscernible over $M$. Since $p^{\left(\omega\right)}$
does not fork over $A$, it follows from Fact \ref{fac:strongsplit}
that $a\bar{a}\equiv_{A}b\bar{a}$, so both are indiscernible over
$A$. But $a_{0}a_{1}\equiv_{M}aa_{1}\equiv_{A}ba_{1}\equiv_{M}ba$
and we are done. 
\end{rem}
From this we get:
\begin{cor}
In the context of Corollary \ref{cor:dp-minimal witness}, if $T$
is dp-minimal, $n=1$, $a_{0}\ind_{A}a_{1}$ and $a_{1}\ind_{A}a_{0}$,
$\varphi\left(x,a_{0}\right)$ forks over $A$, and $a_{0}$ and $a_{1}$
have the same Lascar strong type over $A$, then $\left\{ \varphi\left(x,a_{0}\right),\varphi\left(x,a_{1}\right)\right\} $
is inconsistent. 
\end{cor}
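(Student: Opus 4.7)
The plan is to reduce the statement to Corollary~\ref{cor:dp-minimal witness} with $n=1$. Since $T$ is dp-minimal, by~\cite{AlexAlfTemp} we have $m=3$ and $k=\mplus=2$, so the conclusion of Corollary~\ref{cor:dp-minimal witness} reads exactly as the desired inconsistency of $\{\varphi(x,c_{0}),\varphi(x,c_{1})\}$, provided one produces a Morley sequence $\bar{c}=\langle c_{0},c_{1},c_{2},\ldots\rangle$ over $A$ with $(c_{0},c_{1})\equiv_{A}(a_{0},a_{1})$ and $c_{0}c_{1}\ind_{A}c_{2}$.

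The first main step is to upgrade the regular non-forking hypotheses to strict non-forking between $a_{0}$ and $a_{1}$, via Corollary~\ref{cor:witness on the left}. I would take a strict Morley sequence $I=\langle a_{1},d_{1},d_{2},\ldots\rangle$ over $A$ of length $|T|^{+}$ starting with $a_{1}$ (which exists by Fact~\ref{fac:strict_exist}), and use the hypothesis $a_{1}\ind_{A}a_{0}$ together with left transitivity of non-forking and the strict-non-forking construction of $I$ to verify inductively that $I\ind_{A}a_{0}$. Preservation of indiscernibility then makes $I$ indiscernible over $Aa_{0}$, so all pairs $(a_{0},b)$ with $b\in I$ share a common type over $A$. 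Corollary~\ref{cor:witness on the left} supplies some $b\in I$ with $a_{0}\ind_{A}^{\st}b$, and by the pair-type uniformity this holds for $b=a_{1}$; Theorem~\ref{thm:Main} then gives $a_{1}\ind_{A}^{\st}a_{0}$.

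With $a_{1}\ind_{A}^{\st}a_{0}$ in hand, and using the same-Lascar-strong-type hypothesis (together with Fact~\ref{fac:strongsplit} to control the pair type along the extension), I would generate a strict Morley sequence $\bar{c}=\langle a_{0},a_{1},c_{2},c_{3},\ldots\rangle$ over $A$ by extending via a global strictly non-forking extension of $\tp(a_{1}/Aa_{0})$. By Remark~\ref{rem:strict morley are witnesses}, $c_{\neq i}\ind_{A}c_{i}$ for all $i$; in particular $a_{0}a_{1}\ind_{A}c_{2}$, so Corollary~\ref{cor:dp-minimal witness} yields the desired inconsistency of $\{\varphi(x,a_{0}),\varphi(x,a_{1})\}$.

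The main obstacle is the first step: simultaneously arranging that $I$ is strict Morley starting with $a_{1}$ \emph{and} satisfies $I\ind_{A}a_{0}$. This compatibility is obtained via the strict-non-forking construction of~Fact~\ref{fac:strict_exist} combined with iterated left transitivity applied to the hypothesis $a_{1}\ind_{A}a_{0}$. Once this is in place, the uniformity of pair-types over $Aa_{0}$, the symmetry of strict non-forking over extension bases, and the witness property of strict Morley sequences close the argument.
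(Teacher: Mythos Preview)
Your first step contains a genuine error. You construct a strict Morley sequence $I=\langle a_{1},d_{1},d_{2},\ldots\rangle$ over $A$ and arrange $I\ind_{A}a_{0}$ inductively; you then invoke ``preservation of indiscernibility'' to conclude that $I$ is indiscernible over $Aa_{0}$. But preservation of indiscernibility (as stated in the paper right after Fact~\ref{fac:strongsplit}) requires the \emph{opposite} direction of independence: one needs $a_{0}\ind_{A}I$, not $I\ind_{A}a_{0}$. Without indiscernibility of $I$ over $Aa_{0}$, the pairs $(a_{0},d_{i})$ need not share a common type over $A$, so the conclusion $a_{0}\ind_{A}^{\st}b$ for \emph{some} $b\in I$ given by Corollary~\ref{cor:witness on the left} cannot be transferred to $b=a_{1}$. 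The unused hypothesis $a_{0}\ind_{A}a_{1}$ is exactly what would let you arrange $a_{0}\ind_{A}I$ (extend and conjugate), but then you must obtain $I\ind_{A}a_{0}$ and $a_{0}\ind_{A}I$ \emph{simultaneously} for the same strict Morley $I$ starting with $a_{1}$, and you have not explained how to do this.

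By contrast, the paper's route is shorter and avoids strict non-forking altogether: the preceding Remark (using $a_{1}\ind_{A}a_{0}$ and equality of Lascar strong types) already gives a Morley sequence $\langle a_{0},a_{1},a_{2},\ldots\rangle$ over $A$; the remaining hypothesis $a_{0}\ind_{A}a_{1}$ then yields $a_{1}\ind_{A}a_{2}$ by indiscernibility, and this is all the proof of Proposition~\ref{prop:witness alternation rank}(1) actually needs in the case $n=2$ (the only place the hypothesis $a_{<n}\ind_{A}a_{n}$ is used there is to obtain $a_{O_{<i_{0}}}\ind_{A}a_{i_{0}}$, which for $i_{0}=2$ reduces to $a_{1}\ind_{A}a_{2}$). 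Your plan instead upgrades to a \emph{strict} Morley sequence so as to secure the full $a_{0}a_{1}\ind_{A}c_{2}$ via Remark~\ref{rem:strict morley are witnesses}; that part (your steps two through four) is correct in outline---step two is essentially the Remark's argument run with a global strictly non-forking extension rather than merely a non-forking one---but it is more machinery than the statement requires, and it all rests on the unrepaired first step.
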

The next proposition a strengthening of Lemma \ref{lem:main lemma}
under the assumption of bounded alternation rank.
\begin{prop}
\label{prop:bounded version of main lemma}Assume that $T$ has bounded
alternation rank, and that $A$ is an extension base. Assume that
$a$ is a tuple of length $n$ and let $m$ be the corresponding number
from Definition \ref{def:bounded alt-rank}. Then, if $b=\left\langle b_{i}\left|\, i<\mplus\right.\right\rangle $
starts an infinite indiscernible witness over $A$, $b_{i}\equiv_{Aa}b_{0}$
for all $i<\mplus$ and $b\ind_{A}a$ then $b_{0}\ind_{A}^{\st}a$. 
\end{prop}
Note that we cannot use the proof of Lemma \ref{lem:main lemma} directly.
\begin{proof}
By Lemma \ref{lem:main lemma} (and Remark \ref{rem:weakining}),
it is enough to show that there is a witness $I$ over $A$ (not necessarily
indiscernible), starting with $b_{0}$, with all tuples in it having
the same type over $Aa$, such that $I\ind_{A}a$. So let $I=\left\langle b_{i}\left|\, i<\omega\right.\right\rangle $
be an indiscernible witness starting with $b$ over $A$. Let $p\left(\bar{x}\right)=\tp\left(I/A\right)$,
and $r\left(x\right)=\tp\left(b_{0}/Aa\right)$. What we need to show
consistent is
\[
p\left(\bar{x}\right)\cup\left\{ \neg\psi\left(\bar{x},a\right)\left|\,\psi\left(\bar{x},y\right)\in L\left(A\right),\psi\left(\bar{x},a\right)\mbox{ divides over }A\right.\right\} \cup\bigcup_{i<\omega}r\left(x_{i}\right).
\]
Suppose it is not consistent. Then there is some formula $\varphi\left(x,y\right)$
over $A$ and $k<\omega$ such that $\varphi\left(x,a\right)\in r\left(x\right)$
and 
\[
p\left(\bar{x}\right)\cup\left\{ \neg\psi\left(\bar{x},a\right)\left|\,\psi\left(\bar{x},y\right)\in L\left(A\right),\psi\left(\bar{}x,a\right)\mbox{ divides over }A\right.\right\} \vdash\bigvee_{i<k}\neg\varphi\left(x_{i},a\right).
\]
Let $I'=\left\langle b_{i}\left|\, i\in\mathbb{Q}\right.\right\rangle $
be an $A$-indiscernible sequence containing $I$. As $b\ind_{A}a$,
we can, by left extension, assume that $I'\ind_{A}a$. Note that $\left\langle b_{-k+1+i}\left|\, i<\omega\right.\right\rangle \models p$
and independent from $a$ over $A$, so for some $0<i<k$, $\neg\varphi\left(b_{-i},a\right)$
holds ($i\neq0$ because we know that $\varphi\left(b_{0},a\right)$
holds). Similarly, for every $i<\mplus$ there is some $j_{i}\in\left(i,i+1\right)$
such that $\neg\varphi\left(b_{j_{i}},a\right)$ holds, and there
is such $b_{j_{\infty}}$ for $j_{\infty}>\mplus$. Since $\varphi\left(b_{i},a\right)$
holds for every $0\leq i<\mplus$, this gives a contradiction to the
choice of $m$.\end{proof}
\begin{rem}
If $T$ is dp-minimal, if $n=1$ and $A$ is a model $M$, we can
replace `` $\left\langle b_{i}\left|\, i<2\right.\right\rangle $
starts an indiscernible witness'' by ``$b_{1}\ind_{M}^{\st}b_{0}$''
(because then we can generate a strict Morley sequence starting with
$\left\langle b_{0},b_{1}\right\rangle $). 
\end{rem}

\section{On types related to generically stable types.}

The following notions arises when studying extensions of ``special''
(e.g., generically stable) types:
\begin{defn}
We say that a type $p\in S\left(A\right)$ is \emph{co-dominated}
(over $A$) by a type $q\in S\left(A\right)$ if there are $a,b$
realizing $p,q$ respectively such that whenever $c\ind_{A}b$, we
also have $c\ind_{A}a$. We denote this by $p\triangleleft_{A}^{*}q$
or simply by $p\triangleleft^{*}q$ , when $A$ is clear from the
context. To specify $a$ and $b$ we write $a\triangleleft^{*}b$. 
\end{defn}
Recall that a type $p\in S(A)$ is \emph{dominated} by $q\in S(A)$
($p\triangleleft q$) if there are $a,b$ realizing $p,q$ respectively
such that whenever $b\ind_{A}c$ , we also have $a\ind_{A}c$. It
was shown in \cite{OnUs-stable} that if $T$ is dependent, $q$ is
generically stable and $p\triangleleft q$, then $p$ is also generically
stable. In this section we investigate (using the techniques developed
earlier in the paper) types co-dominated by generically stable types.
\begin{thm}
\label{thm:dombystable-strict}($T$ dependent) Let $A$ be an extension
base. Suppose $p\in S(A)$ is generically stable, $q\triangleleft_{A}^{*}p$
, then: any Morley sequence in $q$ over $A$ is in fact a strict
Morley sequence.\end{thm}
\begin{proof}
To simplify the notation, assume $A=\emptyset$. Suppose $\left\langle b_{i}\left|i<\kappa\right.\right\rangle $
is a Morley sequence in $q$. By Corollary \ref{cor:strictmorley_char}
and Theorem \ref{thm:characterize_witness}, it is enough to show
that $b_{\neq i}\ind b_{i}$ for all $i$.
\begin{claim*}
There exists $\left\langle a_{i}\left|\, i<\kappa\right.\right\rangle $
such that\end{claim*}
\begin{itemize}
\item $a_{i}\models p$,
\item $\left\langle a_{i}b_{i}\left|\, i<\omega\right.\right\rangle $ is
a non-forking sequence,
\item $a_{i}b_{i}\equiv a_{j}b_{j}$ for all $i,j$, and, most importantly, 
\item $b_{i}\triangleleft^{*}a_{i}$.\end{itemize}
\begin{proof}
For $i<\kappa$, we define $I_{i}=\left\langle a_{i}^{j}\left|\, j<i\right.\right\rangle $
that satisfy the conditions for $b_{<i}$, and such that if $i<i'$
then $b_{<i}I_{i}\equiv b_{<i}I_{i'}\upharpoonright i$. 

For $i=0$, there is nothing to define.

For $i=1$, let $a_{1}^{0}\models p$ be such that $b_{0}\triangleleft^{*}a_{1}^{0}$. 

For $i>1$ a limit ordinal, use compactness to find $I_{i}=\left\langle a_{i}^{j}\left|j<i\right.\right\rangle $
such that $a_{i}^{<j}b_{<j}\equiv I_{j}b_{<j}$ for all $j<i$. 

For $i+1$, let $I_{i}'\equiv_{b_{<i}}I_{i}$ be such that $b_{i}\ind b_{<i}I_{i}'$,
and denote $I_{i}'=\sequence{a_{i+1}^{j}}{j<i}$. Now, by left extension
(Corollary \ref{cor:Left Extension}), find some $a_{i+1}^{i}$ such
that $a_{1}^{0}b_{0}\equiv a_{i+1}^{i}b_{i}$ and $a_{i+1}^{i}b_{i}\ind I_{i}'b_{<i}$.
So letting $I_{i+1}=\left\langle a_{i+1}^{j}\left|\, j<i\right.\right\rangle $,
we are done.

Now use compactness to finish as in the limit case.
\end{proof}
Letting $\kappa$ be big enough, we can use Erd\H{o}s-Rado (as in
the proof of Proposition \ref{prop:converse to main lemma}) to find
one which is also indiscernible (of course, the sequence $\langle b_{i}\rangle$
will now change, but we are keeping its type, which is all that is
important).

Now assume that the order type of the sequence $\langle a_{i}b_{i}\rangle$
is $\left(\mathbb{Q},<\right)$. 

Note that $I=\left\langle a_{i}\left|\, i\in\mathbb{Q}\right.\right\rangle $
is a Morley sequence in $p$, so (since $p$ is generically stable)
it is an indiscernible set.

Recall that we are trying to show that $b_{\neq i}\ind b_{i}$ for
all $i$. Assume that $b_{\neq i}\nind b_{i}$, then by co-dominance,
$b_{\neq i}\nind a_{i}$. By symmetry for generically stable types
\cite[Lemma 8.5]{Us}, $a_{i}\nind b_{\neq i}$. Hence there is a
formula $\varphi\left(a_{i},b_{\neq i}\right)$ that shows it (i.e.,
$\varphi\left(x,b_{\neq i}\right)$ divides). Write this formula as
$\varphi\left(a_{i},b_{<i-\varepsilon},b_{>i+\varepsilon}\right)$
for some $\varepsilon\in\mathbb{Q}$ small enough. By indiscernibility,
$\varphi\left(a_{j},b_{<i-\varepsilon},b_{>i+\varepsilon}\right)$
holds for all $j\in\left(i-\varepsilon,i+\varepsilon\right)$. Since
$I$ is an indiscernible set, and $T$ is dependent, $\varphi\left(a_{j},b_{<i-\varepsilon},b_{>i+\varepsilon}\right)$
for almost all (i.e., except finitely many) $j\in\mathbb{Q}$. But
$b_{>i+\varepsilon}$ is bounded, i.e., it is in fact contained in
$b_{<i'}$ for some $i'\in\mathbb{Q}$. So for some $j>i'$, $a_{j}\nind b_{<i'}$
--- a contradiction. \end{proof}
\begin{cor}
\label{cor:dombystable-symm}($T$ dependent) Suppose $A$ is an extension
base, $p,q\in S\left(A\right)$, $q\triangleleft^{*}p$ and $p$ is
generically stable.

Then: for any $b$, if $a\ind_{A}b$ and $a\models q$ then $a\ind_{A}^{\st}b$.
In particular, forking is symmetric for realizations of $q$ {[}caution!
not necessarily for \emph{tuples} of realizations{]}.\end{cor}
\begin{proof}
Suppose $a\ind_{A}b$. Then let $p$ be a global non-forking type
extending $\tp\left(a/Ab\right)$. Generate a Morley sequence $I$
starting with $a$ using $p$, so $I$ is indiscernible over $Ab$,
but also $I\ind b$. As $I$ is a strict Morley sequence by the previous
theorem, it follows that $a\ind^{\st}b$ by Lemma \ref{lem:main lemma}. \end{proof}
\begin{rem}
Suppose that for any $b$, if $b\ind_{A}a$ and $a\models q$ then
$a\ind_{A}b$. Then $q$ is generically stable: 

Let $\left\langle a_{i}\left|\, i<\omega\right.\right\rangle $ be
a Morley sequence in $q$. Then by transitivity and symmetry, $a_{1}\ind_{A}a_{0}a_{2}$,
so $a_{1}a_{0}\equiv_{A}a_{1}a_{2}\equiv_{A}a_{0}a_{1}$. A similar
argument shows that this is an indiscernible set. This is similar
to the proof of Proposition \ref{prop:transitivity implies gen.stable}. \end{rem}
\begin{example}
Let $T$ be the theory of a dense linear order, and let $q$ be the
type of a singleton over the empty set. By Example \ref{exa:rationals},
$q$ satisfies the conclusions of both Theorem \ref{thm:dombystable-strict}
and Corollary \ref{cor:dombystable-symm}. So none of these conditions
imply generic stability. 
\end{example}

\begin{example}
(due to Pierre Simon) Let $L=\left\{ <,E\right\} $ and $T$ be the
theory of a dense linear order with an equivalence relation with dense
classes. Let $p$ be the type in $T^{\eq}$ over $\emptyset$ of any
$E$-class. Then $p$ is generically stable. Let $q$ be the type
of any element in the home sort. Then $q\triangleleft^{*}p$ as witnessed
by taking $a$ and $a/E$ for some $a$. But $q$ is not generically
stable. 
\end{example}

\begin{example}
It is possible to modify this example to have that $q$ is itself
an extension of a generically stable type. The idea is to add another
equivalence relation $E'$ which is a coarsening of $E$, and the
order relation only applies to $E'$-equivalent elements. Then, the
type of any element $a$ in the home sort is generically stable, but
if $a'E'a$ then $\tp\left(a'/a\right)$ is no longer generically
stable, but it is still co-dominated by the type of its $E$-class
in $T^{\eq}$ over $a$.
\end{example}

\begin{example}
One can even construct an example where $q$ is a forking (not generically
stable) extension of a generically stable type $p'$, such that $q$
is co-dominated by the non-forking (hence generically stable) extension
of $p'$. Let $L=\left\{ <,E_{1},E_{2}\right\} $ and let $T$ be
the model completion of the following universal theory:
\begin{itemize}
\item $E_{1}$ and $E_{2}$ are equivalence relations. 
\item $<$ is a partial order.
\item If $x<y$ then $xE_{1}y$, and $<$ is a linear order on each $E_{1}$-class. 
\end{itemize}
This theory has the amalgamation property and the joint embedding
property, hence the model completion exists and has elimination of
quantifiers (see \cite[Theorem 7.4.1]{Hod}). Note that every set
is an extension base. Let $M\models T$ and $a\in M$. Then it is
easy to see that $\tp\left(a/\emptyset\right)$ is generically stable.
Let $b$ realize the (unique) non-forking extension (hence generically
stable by \cite[Corollary 4.11]{Us}) of $\tp\left(a/\emptyset\right)$
over $a$, and let $p=\tp\left(b/a\right)$. Let $a'$ be such that
$a'\mathrela{E_{1}}a$ and $a'\mathrela{E_{2}}b$ (so $\neg\left(a'\mathrela{E_{1}}b\right)$
and $\neg\left(a'\mathrela{E_{2}}a\right)$). Let $q=\tp\left(a'/a\right)$.
Then $q$ is not generically stable, but $a'\triangleleft^{*}b$:

Suppose $c\ind_{a}b$, and we need to show that $c\ind_{a}a'$. By
quantifier elimination we may assume that $c$ is a singleton. There
is only one possible reason that $c\nind_{a}a'$: $c\mathrela{E_{2}}a'$.
But since $a'\mathrela{E_{2}}b$ and $c\ind_{a}b$ this does not hold. 
\end{example}

\section{\label{sec:My-weight-problem}Strict notions of weight }

During a talk by the second author on an early version of this paper,
Anand Pillay asked whether there is a notion of weight, based on notions
of forking discussed here, that characterizes strong dependence. In
this section we confirm that this is indeed the case. Furthermore,
we isolate notions of weight based on strict independence that characterize
dependence, strong dependence, and the tree property of the second
kind.

Recall:
\begin{defn}
A theory is \emph{strongly dependent} if there is no sequence of formulas
$\left\langle \varphi_{i}\left(x,y\right)\left|\, i<\omega\right.\right\rangle $
and an array $\left\langle a_{i,j}\left|\, i,j<\omega\right.\right\rangle $
such that for every $\eta:\omega\to\omega$, the following set is
consistent 
\[
\left\{ \varphi\left(x,a_{i,j}\right)^{\eta\left(i\right)=j}\left|\, i,j<\omega\right.\right\} .
\]
 Where $\varphi^{1}=\varphi$ and $\varphi^{0}=\neg\varphi$. 
\end{defn}
We will need the following lemma:
\begin{lem}
\label{lem:A invariant is M strictly invariant}If $p$ is a global
$A$ invariant type and $M\supseteq A$ is $\left|A\right|^{+}$ saturated,
then $p$ is an heir over $M$. In this case, $p$ is strictly invariant
over $M$. In particular, a Morley sequence that $p$ generates over
$M$ is totally strict. \end{lem}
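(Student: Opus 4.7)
The plan is to deduce the three assertions in order; the only piece that uses the saturation of $M$ is the heir property, and the other two then follow from Fact \ref{fac:strict_exist} applied to $p$ and to its powers $p^{(n)}$.

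For the heir property, I would check the definition directly. Given $\varphi(x,b,m) \in p$ with $m \in M$ and $b$ any tuple from $\C$, the goal is to produce $m' \in M$ with $\varphi(x,m',m) \in p$. Since $M$ is $|A|^+$-saturated and $|Am|=|A|$ (the tuple $m$ is finite), the type $\tp(b/Am)$ is realized in $M$ by some $b'$. Then $(b,m)\equiv_A (b',m)$, so by $A$-invariance of $p$ we have $\varphi(x,b,m)\in p$ iff $\varphi(x,b',m)\in p$, and $b'$ serves as the required witness.

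For strict invariance over $M$, I verify the two clauses separately. Lascar invariance of $p$ over $M$ is immediate from $A$-invariance: any two parameters on an $M$-indiscernible sequence are $M$-conjugate, and any automorphism fixing $M\supseteq A$ fixes $p$. The dividing clause is where Fact \ref{fac:strict_exist} enters: one first observes that any global $A$-invariant type is non-forking over $M$ (if $\varphi(x,b)\in p$ divided over $M$ via an $M$-indiscernible $\langle b_i\rangle$ starting with $b$, then by $M$-invariance $\varphi(x,b_i)\in p$ for all $i$, contradicting dividing; the forking case reduces to this because some disjunct of a forking witness must lie in the complete type $p$). Combining non-forking with the heir property established above, Fact \ref{fac:strict_exist} yields that $p$ is strictly non-forking over $M$, supplying exactly the dividing clause of strict invariance.

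For the totally strict conclusion, let $\langle a_i : i<\omega\rangle$ be a Morley sequence generated by $p$ over $M$. Fixing $n$, the sequence of consecutive $n$-tuples $\langle (a_{ni},\ldots,a_{n(i+1)-1}) : i<\omega\rangle$ is, by construction, a Morley sequence for $p^{(n)}$ over $M$. The global type $p^{(n)}$ is also $A$-invariant (as recorded right after the definition of $p^{(\alpha)}$), so the arguments of the previous two paragraphs apply verbatim with $p^{(n)}$ in place of $p$: it is an heir and strictly non-forking over $M$, hence any Morley sequence it generates over $M$ is strict Morley. Since this holds for every $n$, the original sequence is totally strict. The only technical point is the heir argument, which hinges on the cardinality bookkeeping $|Am|=|A|$ that makes $|A|^+$-saturation sufficient; everything else is essentially an application of Fact \ref{fac:strict_exist}.
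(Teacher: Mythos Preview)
Your proof is correct and follows essentially the same approach as the paper's: prove the heir property by pulling $b$ back into $M$ via $|A|^+$-saturation and $A$-invariance, then invoke Fact~\ref{fac:strict_exist}. The paper's proof is extremely terse (it writes only the heir argument and the words ``we are done by Fact~\ref{fac:strict_exist}''), whereas you spell out the non-forking of $p$ over $M$, the Lascar invariance, and the passage to $p^{(n)}$ for the totally strict conclusion; these are exactly the details the paper leaves implicit. One minor quibble: your clause ``any two parameters on an $M$-indiscernible sequence are $M$-conjugate'' is not literally true in general (they share a Lascar strong type), but your actual argument---$p$ is fixed by $\Aut(\C/A)$, hence by $\Aut(\C/M)$---already gives full $M$-invariance, so the conclusion stands.
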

\begin{proof}
Suppose $\varphi\left(x,c,m\right)\in p$ for $m\in M$. Let $c'\equiv_{Am}c$
be in $M$. Then $\varphi\left(x,c',m\right)\in p$ by invariance
over $A$, and we are done by Fact \ref{fac:strict_exist}.
\end{proof}
In the following definition, $\ind$ is either $\ind^{f}$, $\ind^{d}$,
$\ind^{i}$ or $\ind^{s}$. 
\begin{defn}
We say that $p\in S\left(B\right)$ has $\ind$-pre-weight at least
$\alpha$, if there is a $B$-strictly independent set $\left\{ a_{i}\left|\, i<\alpha\right.\right\} $
and $a\models p$ such that $a\nind_{B}a_{i}$ for all $i$. So we
say that it has pre-weight $<\alpha$ if it is not the case that it
has pre-weight at least $\alpha$. \end{defn}
\begin{thm}
$ $
\begin{enumerate}
\item $T$ is \NTP{} iff all types have $\ind^{d}$-pre-weight $<\left|T\right|^{+}$
iff all types over models have $\ind^{f}$ / $\ind^{d}$-pre-weight
$<\left|T\right|^{+}$.
\item $T$ is dependent iff all types have $\ind^{s}$-pre-weight $<\left|T\right|^{+}$
iff all types over models have $\ind^{i}$ /\textup{ $\ind^{s}$-}pre-weight
$<\left|T\right|^{+}$.
\item $T$ is strongly dependent iff all types have $\ind^{s}$-pre-weight
$<\aleph_{0}$ iff all types over models have $\ind^{i}$ / $\ind^{s}$-pre-weight
$<\aleph_{0}$.
\end{enumerate}
\end{thm}
\begin{proof}
(1) Let $T$ is \NTP{}, $p\in S\left(B\right)$. If it has $\ind^{d}$-pre-weight
$\geq\left|T\right|^{+}$, then there is a $B$-strictly independent
set $\left\{ a_{i}\left|\, i<\left|T\right|^{+}\right.\right\} $
and $a\models p$ such that $a\nind_{B}^{d}a_{i}$ for all $i$. So
for each $i$ there is a formula $\varphi_{i}\left(x,y\right)$ over
$B$ such that $\varphi_{i}\left(x,a_{i}\right)$ divides over $B$.
We may assume that for $i<\omega$, $\varphi_{i}=\varphi$, and this
is a contradiction to Lemma \ref{lem:(Kim's-Lemma)}. Since over models,
forking equals dividing, we are done. 

On the other hand, suppose $T$ is not \NTP{}, i.e., it has the tree
property of the second kind for some formula $\varphi\left(x,y\right)$
and $k<\omega$. So there is an array $\left\langle a_{i,j}\left|\, i,j<\omega\right.\right\rangle $,
such that every row is $k$ inconsistent and each vertical path is
consistent. We may assume by Ramsey and compactness (see the proof
of Claim \ref{cla:making mutually indiscernible}) that the rows are
mutually indiscernible and that the depth of the array is $\left|T\right|^{+}$.
For $i<\left|T\right|^{+}$, let $A_{i}=\left\{ a_{i,j}\left|\, j<\omega\right.\right\} $,
and $p_{i}$ a global co-heir over $A_{i}$ containing $\left\{ x\neq a_{i,j}\left|\, j<\omega\right.\right\} $.
Let $A=\bigcup_{i<\left|T\right|^{+}}A_{i}$ and $M\supseteq A$ some
$\left|A\right|^{+}$ saturated model. 

Let $b_{i,j}$ be generated by $b_{i,j}\models p_{i}|_{Mb_{i,<j}b_{<i}}$.
So we have an array $\left\langle b_{i,j}\left|\, i<\left|T\right|^{+},j<\omega\right.\right\rangle $.
It is easy to see that $\left\langle b_{i,j}\left|\, i<\left|T\right|^{+},j<\omega\right.\right\rangle $
still witnesses the tree property of the second kind for $\varphi$.
The first column $\left\langle b_{i,0}\left|\, i<\left|T\right|^{+}\right.\right\rangle $
is a strictly invariant sequence over $M$, by choice of $p_{i}$
and Lemma \ref{lem:A invariant is M strictly invariant}. Let $a\models\left\{ \varphi\left(x,b_{i,0}\right)\left|\, i<\left|T\right|^{+}\right.\right\} $,
so $p=\tp\left(a/M\right)$ has $\ind^{d}$-pre-weight at least $\left|T\right|^{+}$
and we are done.

Note also, that in the proof the sequence of infinite tuples $\left\langle \left\langle b_{i,j}\left|\, j<\omega\right.\right\rangle \left|\, i<\left|T\right|^{+}\right.\right\rangle $
is itself strictly invariant over $M$, because $p_{i}^{\left(\omega\right)}$
is $A$-invariant. Since each such tuple is an indiscernible sequence
this means that $p$ has $\ind^{s}$-pre-weight at least $\left|T\right|^{+}$
as well. 

(2) Suppose $T$ is dependent. So it is also \NTP{}. By Fact \ref{fac:strongsplit},
and since forking = dividing over models, (1) implies left to right.

Conversely, assume all types over models have $\ind^{s}$-pre-weight
$<\left|T\right|^{+}$. If $T$ is not dependent then there is a formula
$\varphi\left(x,y\right)$ and an array of mutually indiscernible
sequences $\left\langle a_{i,j}\left|\, i<\left|T\right|^{+},\, j<\omega\right.\right\rangle $
such that the set $\left\{ \varphi\left(x,a_{i,0}\right)\land\neg\varphi\left(x,a_{i,1}\right)\left|\, i<\left|T\right|^{+}\right.\right\} $
is consistent. Now apply the same proof as in (1), and note that since
$p_{i}^{\left(2\right)}$ is also $A$ invariant, the sequence of
pairs $\left\langle b_{i,0},b_{i,1}\right\rangle $ is strictly invariant. 

(3) Suppose $T$ is strongly dependent. If $p\in S\left(B\right)$
has $\ind^{s}$-pre-weight at least $\aleph_{0}$, then we can construct
an array of mutually indiscernible sequences over $B$, $\left\langle a_{i,j}\left|\, i,j<\omega\right.\right\rangle $
and such that $\left\{ \varphi_{i}\left(x,a_{i,0}\right)\land\neg\varphi_{i}\left(x,a_{i,1}\right)\left|\, i<\omega\right.\right\} $
is consistent:

There is some $a\models p$ and a strictly independent set of pairs
$\left\{ \left(a_{i,0},a_{i,1}\right)\left|\, i<\omega\right.\right\} $
over $B$ such that each pair starts an indiscernible sequence over
$B$ and $\varphi\left(a,a_{i,0}\right)\land\neg\varphi\left(a,a_{i,1}\right)$
holds. By definition of strict independence, there are $B$-mutually
indiscernible sequences $I_{i}=\left\langle \left(b_{j}^{i,0},b_{j}^{i,1}\right)\left|\, j<\omega\right.\right\rangle $
(of pairs) starting with $\left(a_{i,0},a_{i,1}\right)$. By dependence,
$\left\langle \ldots b_{j}^{i,0},b_{j}^{i,1}\ldots\right\rangle $
are also mutually indiscernible over $B$ (if not, one can easily
find infinite alternation). 

Since $T$ is dependent this is a contradiction to strong dependence
(for $\varphi_{i}$ in the definition, take $\psi_{i}\left(x,y,z\right)=\varphi_{i}\left(x,y\right)\land\neg\varphi_{i}\left(x,z\right)$).
By Fact \ref{fac:strongsplit}, and since forking = dividing over
models, left to right holds. 

Conversely, assume all types over models have $\ind^{s}$-pre-weight
$<\aleph_{0}$, and proceed as in (2). \end{proof}
\begin{rem}
One can change the definition of weight so that instead of $\left\{ a_{i}\right\} $
being a strictly independent set, it would be a strictly invariant
sequence. The theorem would go through with essentially the same proof. 
\end{rem}

\begin{rem}
Clause (1) in the proposition above, as well as its proof, are similar
to \cite[Theorem 4.9]{ChernikovNTP2}, but were done independently. 
\end{rem}

\section{\label{sec:Problems}Problems}
\begin{problem}
This is one of the main problems that motivated our paper: Is it the
case that (under reasonable assumptions, e.g., resilience, or even
dependence) given a model $M$ (or just an extension base) and tuples
$a,b$, do we have $a\ind_{M}^{\st}b$ if and only if {[}$a\ind_{M}b$
and $b\ind_{M}a${]}?
\end{problem}

\begin{problem}
Are all witnesses strictly independent? This seems a bit too strong,
but perhaps with some more assumptions it becomes true. Note that
by Lemma \ref{lem:main lemma} and Theorem \ref{thm:characterize_witness}
(and the remark following), if $\left\langle a_{i}\right\rangle $
is an indiscernible witness over an extension base $A$ and $T$ is
\NTP{}, then for any $i\neq j$, $a_{i}\ind_{A}^{\st}a_{j}$, so
if $T$ is resilient then (by Theorem \ref{thm:Main}) any pair is
strictly independent. 
\end{problem}

\begin{problem}
Is strict independence equivalent to witness independence (and hence
to strict non-forking) for a pair of tuples over an extension base
in an \NTP{} theory?
\end{problem}

\begin{problem}
Is the original ``Shelah's Lemma'' true in resilient theories? That
is, are strictly non-forking sequences strictly independent? Note
that this is true for both dependent and simple (Theorem \ref{thm:simple-allequivalent})
theories (a priori for quite different reasons). 
\end{problem}

\begin{problem}
Theorem \ref{thm:Main} says that over extension bases strict independence
for two elements (tuples) is the same as strict non-forking (under
resilience). It is not so clear what could be the appropriate analogue
of this results for an arbitrary set of tuples. 
\end{problem}

\begin{problem}
Is strict non-forking symmetric in \NTP{} theories? 
\end{problem}
\bibliographystyle{alpha}
\bibliography{common}

\end{document}